\documentclass[a4paper,11pt,leqno]{smfart}

\usepackage{amssymb,amsmath,enumerate,verbatim,mathrsfs,graphics,graphicx,mathptm,float,mathtools}
\usepackage[T1]{fontenc}
\usepackage[english, francais]{babel}
\usepackage[all]{xy}
\usepackage[pdfstartview=FitH,
            colorlinks=true,
            linkcolor=blue,
            urlcolor=blue,
            citecolor=red,
            bookmarks,
            bookmarksopen=true,
            bookmarksnumbered=true]{hyperref}

\usepackage{cleveref}
\setcounter{tocdepth}{2}

\theoremstyle{plain}

\newtheorem{theoalph}{Theorem}

\newtheorem{thmalph}[theoalph]{Theorem}

\theoremstyle{definition}

\theoremstyle{remark}

\theoremstyle{plain}
\newtheorem{thmsec}{Theorem}[section]

\newtheorem{pro}[thmsec]{Proposition}
\newtheorem{lem}[thmsec]{Lemma}

\theoremstyle{definition}
\newtheorem{defin}[thmsec]{Definition}

\theoremstyle{remark}
\newtheorem{rem}[thmsec]{Remark}

\newtheorem{eg}[thmsec]{Example}

\def\og{\leavevmode\raise.3ex\hbox{$\scriptscriptstyle\langle\!\langle$~}}
\def\fg{\leavevmode\raise.3ex\hbox{~$\!\scriptscriptstyle\,\rangle\!\rangle$}}

\setlength{\textwidth}{16.7cm} \setlength{\textheight}{21cm}
\setlength{\topmargin}{0cm} \setlength{\headheight}{0.59cm}
\setlength{\headsep}{1.5cm} \setlength{\oddsidemargin}{-0.4cm}
\setlength{\evensidemargin}{-0.4cm} \marginparwidth 1.9cm
\marginparsep 0.4cm \marginparpush 0.4cm \footskip 2.2cm
\setlength{\baselineskip}{0.8cm}

\addtocounter{section}{0}             
\numberwithin{equation}{section}       


\newcommand{\Z}{\mathbb{Z}}

\newcommand{\C}{\mathbb{C}}

\newcommand{\pp}{\mathbb{P}^{2}_{\mathbb{C}}}


\newcommand\Sing{\mathrm{Sing}}

\newcommand\IF{\mathrm{I}_{\mathcal{F}}}
\newcommand\IinvF{\mathrm{I}_{\mathcal{F}}^{\mathrm{inv}}}
\newcommand\ItrF{\mathrm{I}_{\mathcal{F}}^{\hspace{0.2mm}\mathrm{tr}}}

\newcommand\IinvH{\mathrm{I}_{\mathcal{H}}^{\mathrm{inv}}}
\newcommand\ItrH{\mathrm{I}_{\mathcal{H}}^{\hspace{0.2mm}\mathrm{tr}}}
\newcommand\Cinv{\mathrm{C}_{\hspace{-0.3mm}\mathcal{H}}}
\newcommand\Dtr{\mathrm{D}_{\hspace{-0.3mm}\mathcal{H}}}

\newcommand\F{\mathcal{F}}

\newcommand\omegahesse{\omega_{\scalebox{0.64}{\ensuremath H}}^{4}}
\newcommand\Gunderline{{\mspace{2mu}\underline{\mspace{-2mu}\mathcal{G}\mspace{-2mu}}\mspace{2mu}}}

\newcommand\omegaoverline{{\mspace{2mu}\overline{\mspace{-1.4mu}\omega\mspace{-1.4mu}}\mspace{2mu}}}

\newcommand\Hesse{\mathcal{F}_{\hspace{-0.4mm}\raisebox{-0.2mm}{\tiny{$H$}}}^{4}}
\newcommand\elltext{\scalebox{1.1}{\ensuremath \ell}}
\newcommand\ellindice{\scalebox{0.9}{\ensuremath \ell}}


\begin{document}
\selectlanguage{english}

\title[Convex foliations of degree $4$ on the complex projective plane]{Convex foliations of degree $4$ on the complex projective plane}
\date{\today}

\author{Samir \textsc{Bedrouni}}

\address{Facult\'e de Math\'ematiques, USTHB, BP $32$, El-Alia, $16111$ Bab-Ezzouar, Alger, Alg\'erie}
\email{sbedrouni@usthb.dz}

\author{David \textsc{Mar\'{\i}n}}

\thanks{D. Mar\'{\i}n acknowledges financial support from the Spanish Ministry of Economy and Competitiveness, through grant MTM2015-66165-P and the "Mar\'{\i}a de Maeztu" Programme for Units of Excellence in R\&D (MDM-2014-0445).}

\address{Departament de Matem\`{a}tiques Universitat Aut\`{o}noma de Barcelona E-08193  Bellaterra (Barcelona) Spain}

\email{davidmp@mat.uab.es}

\keywords{convex foliation, homogeneous foliation, singularity, inflection divisor}

\maketitle{}

\begin{abstract}
We show that up to automorphism of $\pp$ there are $5$ homogeneous convex foliations of degree four on~$\pp.$ Using this result, we give a partial answer to a question posed in $2013$ by D.~\textsc{Mar\'{\i}n} and J. \textsc{Pereira} about the classification of reduced convex foliations on~$\pp.$

\noindent {\it 2010 Mathematics Subject Classification. --- 37F75, 32S65, 32M25.}
\end{abstract}

\section*{Introduction}
\bigskip

\noindent The set $\mathbf{F}(d)$ of foliations of degree $d$ on $\pp$ can be identified with a \textsc{Zariski} open subset of the projective space $\mathbb{P}_{\C}^{(d+2)^{2}-2}$. The group of automorphisms of $\pp$ acts on $\mathbf{F}(d)$. The orbit of an element $\F\in\mathbf{F}(d)$ under the action of
$\mathrm{Aut}(\pp)=\mathrm{PGL}_3(\mathbb{C})$ will be denoted by $\mathcal{O}(\F)$. Following \cite{MP13} we will say that a foliation in $\mathbf{F}(d)$ is \textsl{convex} if its leaves other than straight lines have no inflection points. The subset $\mathbf{FC}(d)$ of $\mathbf{F}(d)$ consisting of all convex foliations is \textsc{Zariski} closed in  $\mathbf{F}(d)$.

\noindent By \cite[Proposition~2, page~23]{Bru00} every foliation of degree~$0$ or~$1$ is convex, {\it i.e.} $\mathbf{FC}(0)=\mathbf{F}(0)$ and $\mathbf{FC}(1)=\mathbf{F}(0).$ For $d\geq2$, $\mathbf{FC}(d)$ is a proper closed subset of $\mathbf{F}(d)$ and it contains the \textsc{Fermat} foliation $\F_{0}^{d}$ of degree $d$, defined in the affine chart $(x,y)$ by the $1$-form (\emph{see}  \cite[page~179]{MP13}) $$\omegaoverline_{0}^{d}=(x^{d}-x)\mathrm{d}y-(y^{d}-y)\mathrm{d}x.$$

\noindent The closure inside $\mathbf{F}(d)$ of the orbit of $\F_{0}^{d}$ contains the foliations $\mathcal{H}_{0}^{d}$, resp. $\mathcal{H}_{1}^{d}$, resp. $\F_{1}^{d}$ (necessarily convex) defined by the $1$-forms (\emph{see} \cite[Example~6.5]{BM17} and \cite[page~75]{Bed17})
\begin{align*}
&\omega_{\hspace{0.2mm}0}^{\hspace{0.2mm}d}=(d-1)y^{d}\mathrm{d}x+x(x^{d-1}-dy^{d-1})\mathrm{d}y,&&
\text{resp.}\hspace{1.5mm}\omega_{\hspace{0.2mm}1}^{\hspace{0.2mm}d}=y^d\mathrm{d}x-x^d\mathrm{d}y,&&
\text{resp.}\hspace{1.5mm}\omegaoverline_{1}^{d}=y^{d}\mathrm{d}x+x^{d}(x\mathrm{d}y-y\mathrm{d}x).
\end{align*}
In other words, we have the following inclusions
\begin{equation}\label{equa:FC(d)}
\mathcal{O}(\mathcal{H}_{0}^{d})\cup\mathcal{O}(\mathcal{H}_{1}^{d})\cup\mathcal{O}(\F_{0}^{d})\cup\mathcal{O}(\F_{1}^{d})
\subset
\overline{\mathcal{O}(\F_{0}^{d})}
\subset
\mathbf{FC}(d).
\end{equation}
\noindent The foliations $\mathcal{H}_{0}^{d}$ and $\mathcal{H}_{1}^{d}$ are {\sl homogeneous}, {\it i.e.} they are invariant by homotheties; moreover, they are linearly conjugated for $d=2$, but not for $d\geq3$, \emph{see} \cite{BM17}. The dimension of the orbit of  $\F_{1}^{d}$ is~$6$ \cite{Bed17}, which is the least possible dimension in any degree  $d$ greater or equal to $2$ (\cite[Proposition~2.3]{CDGBM10}). Notice (\emph{see} \cite{Bed17}) that this bound is also attained by the non convex foliation  $\F_{2}^{d}$ defined by the $1$-form
 $$\omegaoverline_{2}^{d}=x^{d}\mathrm{d}x+y^{d}(x\mathrm{d}y-y\mathrm{d}x).$$

\noindent The classification of the elements of  $\mathbf{FC}(2)$ has been established by C.~\textsc{Favre} and J.~\textsc{Pereira} \cite[Proposition~7.4]{FP15}: up to automorphism of $\pp$, the foliations $\mathcal{H}_{0}^{2}$, $\F_{0}^{2}$ and $\F_{1}^{2}$ are the only convex foliations of degree $2$ on $\pp$.
This  classification implies that in degree $2$ the inclusions~(\ref{equa:FC(d)}) are equalities:
\begin{equation}\label{equa:FC(2)}
\mathbf{FC}(2)=\overline{\mathcal{O}(\F_{0}^{2})}=\mathcal{O}(\mathcal{H}_{0}^{2})\cup\mathcal{O}(\F_{0}^{2})\cup\mathcal{O}(\F_{1}^{2}).
\end{equation}
\noindent For dimensional reasons the orbits $\mathcal{O}(\F_{1}^{d})$ and $\mathcal{O}(\F_{2}^{d})$ are closed; combining equalities~(\ref{equa:FC(2)}) with \cite[Theorem~3]{CDGBM10}, we see, in particular, that the only closed orbits in  $\mathbf{F}(2)$ by the action of $\mathrm{Aut}(\pp)$ are those of $\F_{1}^{2}$ and $\F_{2}^{2}.$

\noindent Convex foliations of degree $3$ has been classified by the first author in his thesis \cite[Corollary~C]{Bed17}: every foliation $\F\in\mathbf{FC}(3)$ is linearly conjugated to one of the four foliations $\mathcal{H}_{0}^{3}$, $\mathcal{H}_{1}^{3}$, $\F_{0}^{3}$ or $\F_{1}^{3}$. This implies that the inclusions~(\ref{equa:FC(d)}) for $d=3$ are also equalities:
\begin{equation}\label{equa:FC(3)}
\mathbf{FC}(3)=\overline{\mathcal{O}(\F_{0}^{3})}=\mathcal{O}(\mathcal{H}_{0}^{3})\cup\mathcal{O}(\mathcal{H}_{1}^{3})
\cup\mathcal{O}(\F_{0}^{3})\cup\mathcal{O}(\F_{1}^{3}).
\end{equation}

\noindent For $d\geq4$, the classification of the elements of $\mathbf{FC}(d)$ modulo $\mathrm{Aut}(\pp)$ remains open and the topological structure of $\mathbf{FC}(d)$ is not yet well understood. In the sequel we will focus on the case $d=4$. Notice  (\emph{see}~\cite[page~181]{MP13}) that the set $\mathbf{FC}(4)$ contains the foliation $\Hesse$, called  \textsc{Hesse} pencil of degree~$4$, defined by $$\omegahesse=(2x^{3}-y^{3}-1)y\mathrm{d}x+(2y^{3}-x^{3}-1)x\mathrm{d}y\hspace{1mm};$$ furthermore $\mathcal{O}(\Hesse)\neq\mathcal{O}(\F_{0}^{4})$ and $\dim\mathcal{O}(\Hesse)=\dim\mathcal{O}(\F_{0}^{4})=8$. So that the inclusion $\overline{\mathcal{O}(\F_{0}^{4})}\subset\mathbf{FC}(4)$ is strict, in contrast to the previous cases of degrees $2$ and $3.$

\noindent In this paper we propose to classify, up to automorphism, the foliations of $\mathbf{FC}(4)$ which are homogeneous. More precisely, we establish the following theorem.
\begin{thmalph}\label{thmalph:class-homogene-convexe-4}
{\sl Up to automorphism of $\pp$ there are five homogeneous convex foliations of degree four $\mathcal{H}_1,\ldots,\mathcal{H}_{5}$ on the complex projective plane. They are respectively described in affine chart by the following $1$-forms

\begin{itemize}
\item [\texttt{1.}]\hspace{1mm} $\omega_1=y^4\mathrm{d}x-x^4\mathrm{d}y$;

\item [\texttt{2.}]\hspace{1mm} $\omega_2=y^3(2x-y)\mathrm{d}x+x^3(x-2y)\mathrm{d}y$;

\item [\texttt{3.}]\hspace{1mm} $\omega_3=y^2(6x^2+4xy+y^2)\mathrm{d}x-x^3(x+4y)\mathrm{d}y$;

\item [\texttt{4.}]\hspace{1mm} $\omega_4=y^3(4x+y)\mathrm{d}x+x^3(x+4y)\mathrm{d}y$;

\item [\texttt{5.}]\hspace{1mm} $\omega_5=y^2(6x^2+4xy+y^2)\mathrm{d}x+3x^4\mathrm{d}y$.
\end{itemize}
}
\end{thmalph}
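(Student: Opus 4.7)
The approach combines the homogeneous structure with the inflection-divisor decomposition developed in \cite{BM17,Bed17}. Write the homogeneous $\mathcal{H}\in\mathbf{FC}(4)$ as $\omega=A\mathrm{d}x+B\mathrm{d}y$ in the adapted chart, with $A,B\in\C[x,y]$ homogeneous of degree~$4$ and $\gcd(A,B)=1$. The subgroup of $\mathrm{Aut}(\pp)$ fixing the origin and the line at infinity acts on such pairs as $\mathrm{GL}(2,\C)$, so the classification up to $\mathrm{Aut}(\pp)$ reduces, up to matching the distinguished flag a posteriori, to normalising $(A,B)$ under this linear action subject to convexity.

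For homogeneous foliations the inflection divisor factorises as $\IH=\IinvH\cdot\ItrH$: the invariant part $\IinvH$ is supported on the invariant lines---whose slopes are the roots of the binary quintic $C_{\mathcal{H}}:=xA+yB$, together possibly with the line at infinity---while $\ItrH$ is given by an explicit polynomial expression in $A,B$ and their partial derivatives. Convexity of $\mathcal{H}$ is equivalent to $\ItrH\in\C^*$, which translates into a finite system of homogeneous polynomial equations on the coefficients of~$A,B$.

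The body of the proof is a case analysis stratified by the multiplicity profile of the five roots of $C_{\mathcal{H}}$ in $\mathbb{P}^1$, together with the invariance or not of the line at infinity. Within each stratum, the $3$-transitivity of $\mathrm{PGL}(2,\C)$ places three chosen roots of $C_{\mathcal{H}}$ at prescribed points, reducing the moduli to at most a couple of free parameters; the equations $\ItrH\equiv 0$ then cut out a finite set of candidate foliations, each of which is matched to one of the forms $\omega_1,\ldots,\omega_5$ after clearing common factors and applying a last linear change of coordinates.

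The main obstacle is the combinatorial and algebraic bookkeeping: enumerating all root-multiplicity patterns of $C_{\mathcal{H}}$ combined with the behaviour at $L_\infty$, carrying the normalisation cleanly through each case, and solving the resulting (often over-determined) polynomial systems without losing branches. A separate and delicate final check is that $\mathcal{H}_1,\ldots,\mathcal{H}_5$ lie in five distinct $\mathrm{Aut}(\pp)$-orbits, not merely in distinct $\mathrm{GL}(2,\C)$-orbits. This I would settle by comparing discrete $\mathrm{Aut}(\pp)$-invariants such as the singular scheme with its linearisation data, the multiplicity profile of $C_{\mathcal{H}_i}$, and the ramification combinatorics of the associated Gauss map $\mathcal{G}_{\mathcal{H}_i}\colon\mathbb{P}^1\to\mathbb{P}^1$, showing these differ across the five foliations.
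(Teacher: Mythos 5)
Your setup is the right one and matches the paper's framework: in the adapted chart the classification reduces to the $\mathrm{GL}_2(\C)$-action on pairs $(A,B)$, the inflection divisor of a homogeneous foliation is $z\hspace{0.3mm}\Cinv\Dtr=0$ with $\Cinv=xA+yB$ and $\Dtr=A_xB_y-A_yB_x$, and convexity means that the transverse part $\ItrH$ of $\Dtr$ is trivial. But the proof as proposed has a structural gap: the stratification you choose --- the root-multiplicity profile of the binary quintic $\Cinv$ together with the (in fact automatic, for $d\geq1$) invariance of $L_\infty$ --- does not by itself turn convexity into a polynomial system. The condition is that every linear factor of $\Dtr$ (degree $2d-2=6$) divides $\Cinv$, so within each of your strata you must further branch over how the six roots of $\Dtr$ distribute, with multiplicities $\tau(\mathcal H,s)-1$, among the lines of $\Cinv$. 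That finer invariant is exactly the \emph{type} $\mathcal{T}_{\mathcal H}=\sum r_k\cdot\mathrm{R}_k$ with $\sum k\hspace{0.2mm}r_k=6$, and it --- not the factorisation of $\Cinv$ --- is what the paper stratifies by. Until you reach that stratification your ``finite system of homogeneous polynomial equations'' is not yet written down, and the proposal stops precisely where the content of the theorem begins: none of the strata is actually normalised or solved, and the over-determined systems you acknowledge as the main obstacle are left untouched.

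The paper makes those systems tractable by a dictionary you do not use: a homogeneous foliation is convex if and only if the associated degree-$4$ rational map $\Gunderline_{\mathcal{H}}\colon\mathbb{P}^1_{\C}\to\mathbb{P}^1_{\C}$, $[x:y]\mapsto[-A:B]$, is \emph{critically fixed}, with the fixed critical points and their multiplicities matching the radial singularities and their orders. This imports the classification of critically fixed quartic maps from \cite{CGNPP15} for the types $3\cdot\mathrm{R}_2$ and $2\cdot\mathrm{R}_1+2\cdot\mathrm{R}_2$, and the normal forms of \cite{BM17} (Proposition~\ref{pro:omega1-omega3}) for $2\cdot\mathrm{R}_3$ and $1\cdot\mathrm{R}_1+1\cdot\mathrm{R}_2+1\cdot\mathrm{R}_3$, leaving only $3\cdot\mathrm{R}_1+1\cdot\mathrm{R}_3$ to be solved by hand. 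Two further points your plan would have to supply: the exclusion of the remaining partition $4\cdot\mathrm{R}_1+1\cdot\mathrm{R}_2$ (five distinct radial singularities) is a genuine fact, quoted from \cite[Remark~2.5]{Bed17}, not something that falls out of normalisation; and for the non-conjugacy of $\mathcal{H}_1,\ldots,\mathcal{H}_5$ the type alone already suffices (it is an $\mathrm{Aut}(\pp)$-invariant, since for $d\geq2$ the origin is the unique singular point of vanishing order $d$ and any conjugating automorphism is therefore linear in the adapted chart), so the additional invariants you list, while correct, are not needed.
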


\noindent By \cite{Per01} we know that every foliation of degree $d\geq 1$ on $\pp$ can not have more than $3d$ (distinct) invariant lines. If this bound is reached for  $\F\in\mathbf{F}(d)$, then $\F$ necessarily belongs to~$\mathbf{FC}(d)$; in this case we say that $\F$ is  \textsl{reduced convex}. To our knowledge the only reduced convex foliations known in the literature are those presented in
 \cite[Table~1.1]{MP13}: the \textsc{Fermat} foliation $\F_{0}^{d}$ in any degree, the  \textsc{Hesse} pencil $\Hesse$ and the foliations given by the $1$-forms
\[(y^2-1)(y^2- (\sqrt{5}-2)^2)(y+\sqrt{5}x) \mathrm{d}x-(x^2-1)(x^2- (\sqrt{5}-2)^2)(x+\sqrt{5}y) \mathrm{d}y \, ,\]
\[(y^3-1)(y^3+7x^3+1) y \mathrm{d}x-(x^3-1)(x^3+7 y^3+1) x \mathrm{d}y\,,\]
which have degrees $5$ and $7$ respectively. D.~\textsc{Mar\'{\i}n} and J. \textsc{Pereira} \cite[Problem~9.1]{MP13} asked the following question: are there other reduced convex foliations? The answer in degree $2$, resp. $3$, to this question is negative, by \cite[Proposition~7.4]{FP15}, resp. \cite[Corollary~6.9]{BM17}. Theorem~\ref{thmalph:class-homogene-convexe-4} allows us to show that the answer to \cite[Problem~9.1]{MP13} in degree $4$ is also negative.
\begin{thmalph}\label{thmalph:convexe-reduit-4}
{\sl Up to automorphism of $\pp$, the \textsc{Fermat} foliation $\F_{0}^{4}$ and the \textsc{Hesse} pencil $\Hesse$ are the only reduced convex foliations of degree four on $\pp.$}
\end{thmalph}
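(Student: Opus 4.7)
The plan is the following. First, I would use Theorem~\ref{thmalph:class-homogene-convexe-4} to rule out the homogeneous case. A direct inspection of the explicit $1$-forms $\omega_{1},\ldots,\omega_{5}$ shows that each of the five foliations $\mathcal{H}_{1},\ldots,\mathcal{H}_{5}$ has exactly $6$ invariant lines, namely the five radial lines through the homogeneous singular point together with the line at infinity. Since $6<12=3d$, none of these is reduced. Because a foliation of degree $d$ on $\pp$ is homogeneous if and only if it admits a singular point of maximal algebraic multiplicity $d$, every reduced convex foliation $\F\in\mathbf{FC}(4)$ must satisfy $\nu_{p}(\F)\leq 3$ at each $p\in\Sing(\F)$.

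Second, I would analyze the configuration $\mathcal{A}$ of the $12$ invariant lines of $\F$. Writing $t_{k}$ for the number of points of $\pp$ lying on exactly $k$ lines of $\mathcal{A}$, the incidence identity $\sum_{k\geq 2}\binom{k}{2}t_{k}=\binom{12}{2}=66$ must hold, and every $k$-fold point is a singularity of $\F$. At a non-dicritical singular point $p$ the bound $k\leq\nu_{p}(\F)+1\leq 4$ applies, whereas at a dicritical point the value of $k$ can be larger (as in $\F_{0}^{4}$, where the origin and two points at infinity each carry $5$ invariant lines). Blowing up such a dicritical point produces a foliation whose restriction to the exceptional divisor is constrained by the local convexity, and Theorem~\ref{thmalph:class-homogene-convexe-4} re-enters here as a structural input controlling the homogeneous convex germ. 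Combined with the global identity $\sum_{p}\mu_{p}(\F)=d^{2}+d+1=21$ and the convexity condition that the inflection divisor $\IF$, of degree $3d=12$, is supported on $\mathcal{A}$, these constraints produce a short list of admissible sequences $(t_{k})_{k\geq 2}$.

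Third, for each admissible sequence I would realize the corresponding line arrangement in $\pp$, normalize it by an element of $\mathrm{Aut}(\pp)$ (placing the higher-multiplicity points at coordinate vertices), and check that only the Fermat arrangement (the invariant lines of $\F_{0}^{4}$) and the Hesse arrangement (the invariant lines of $\Hesse$) survive. I would then argue that $\F$ is determined by its $12$-line arrangement modulo $\mathrm{Aut}(\pp)$, for instance by observing that the space of convex degree-$4$ foliations admitting a prescribed configuration of $12$ invariant lines is, in our setting, a single orbit; this yields $\F\in\mathcal{O}(\F_{0}^{4})\cup\mathcal{O}(\Hesse)$ and concludes the proof.

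The hardest step should be the combinatorial-geometric enumeration of the second paragraph: dicritical singularities break the naive bound $k\leq 4$, so one has to refine the local analysis via blow-ups (where Theorem~\ref{thmalph:class-homogene-convexe-4} is used as a classification tool for the homogeneous convex germs that appear along exceptional divisors), and one must rule out \emph{ghost} sequences $(t_{k})$ which satisfy all the numerical identities but are not realizable by any genuine reduced convex foliation of degree four.
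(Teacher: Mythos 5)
Your outline is not the paper's argument, and as it stands it has genuine gaps that I do not see how to close. The paper does not study the arrangement of the $12$ invariant lines combinatorially. Instead, for \emph{each} invariant line $\elltext$ it degenerates $\F$ to a convex homogeneous foliation $\mathcal{H}_{\ellindice}\in\overline{\mathcal{O}(\F)}$ leaving $\elltext$ invariant (Proposition~\ref{pro:F-degenere-H}), with the same singularities, tangency orders \emph{and Camacho--Sad indices} along $\elltext$. Theorem~\ref{thmalph:class-homogene-convexe-4} then identifies $\mathcal{H}_{\ellindice}$ with one of $\mathcal{H}_1,\ldots,\mathcal{H}_5$, and Table~\ref{tab:CS(lambda)} lists the possible Camacho--Sad values. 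The decisive step is analytic, not combinatorial: at a non-radial singularity $m$ (which exists by Lemma~\ref{lem:non-radiale}) exactly two invariant lines meet, and $\mathrm{CS}(\F,\elltext_m^{(1)},m)\,\mathrm{CS}(\F,\elltext_m^{(2)},m)=1$; confronting this with the table forces either the pair $\{-3,-\tfrac13\}$ (leading, via the types $2\cdot\mathrm{R}_3$ and $3\cdot\mathrm{R}_1+1\cdot\mathrm{R}_3$ or $2\cdot\mathrm{R}_1+2\cdot\mathrm{R}_2$, to three non-aligned radial points of order $3$ and hence to $\F_0^4$) or the value $-1$ everywhere (forcing type $3\cdot\mathrm{R}_2$ on every line and, after an explicit normalization and computation, the Hesse pencil). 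Your proposal contains no substitute for this Camacho--Sad input, and I do not believe the incidence identities $\sum_k\binom{k}{2}t_k=66$, $\sum_p\mu_p=21$ and $\deg\IF=12$ alone isolate the Fermat and Hesse arrangements; you yourself concede the existence of \og ghost\fg{} sequences without giving any mechanism to eliminate them.

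Two further steps are asserted but not justified. First, the claim that a reduced convex foliation of degree $4$ is determined, up to $\mathrm{Aut}(\pp)$, by its configuration of $12$ invariant lines (\og a single orbit\fg) is nontrivial and nowhere proved; without it, even a complete classification of the admissible arrangements would not finish the proof. Second, the role you assign to Theorem~\ref{thmalph:class-homogene-convexe-4} is misplaced: it classifies \emph{global} homogeneous convex foliations of $\pp$, not germs along exceptional divisors of blow-ups, and the bridge from $\F$ to that theorem is precisely the degeneration statement of Proposition~\ref{pro:F-degenere-H} (take the top-degree homogeneous part of a defining $1$-form in a chart where $\elltext$ is the line at infinity), which your sketch never formulates. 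The correct local bound is also $\sigma(\F,s)\leq\tau(\F,s)+1$, with equality for reduced convex foliations by Lemma~\ref{lem:sigma(F,s)}, rather than the bound $k\leq\nu_p(\F)+1$ you invoke. The opening observation that no homogeneous foliation can be reduced convex is correct but plays no role in the actual proof.
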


\section{Preliminaries}
\bigskip

\subsection{Singularities and inflection divisor of a foliation on the projective plane}

A degree $d$ holomorphic foliation $\F$ on $\pp$ is defined in homogeneous coordinates $[x:y:z]$ by a $1$-form $$\omega=a(x,y,z)\mathrm{d}x+b(x,y,z)\mathrm{d}y+c(x,y,z)\mathrm{d}z,$$ where $a,$ $b$ and $c$ are homogeneous polynomials of degree $d+1$ without common factor and satisfying the \textsc{Euler} condition $i_{\mathrm{R}}\omega=0$, where $\mathrm{R}=x\frac{\partial{}}{\partial{x}}+y\frac{\partial{}}{\partial{y}}+z\frac{\partial{}}{\partial{z}}$ denotes the radial vector field and  $i_{\mathrm{R}}$ is the interior product by  $\mathrm{R}$. The {\sl singular locus} $\mathrm{Sing}\mathcal{F}$ of $\mathcal{F}$ is the projectivization of the singular locus of~$\omega$ $$\mathrm{Sing}\,\omega=\{(x,y,z)\in\mathbb{C}^3\,\vert \, a(x,y,z)=b(x,y,z)=c(x,y,z)=0\}.$$

\noindent Let us recall some local notions attached to the pair $(\mathcal{F},s)$, where $s\in\Sing\mathcal{F}$. The germ of $\F$ at $s$ is defined, up to multiplication by a unity in the local ring $\mathcal{O}_s$ at $s$, by a vector field $\mathrm{X}=A(\mathrm{u},\mathrm{v})\frac{\partial{}}{\partial{\mathrm{u}}}+B(\mathrm{u},\mathrm{v})\frac{\partial{}}{\partial{\mathrm{v}}}$. The~{\sl vanishing order} $\nu(\mathcal{F},s)$ of $\mathcal{F}$ at $s$ is given by $$\nu(\mathcal{F},s)=\min\{\nu(A,s),\nu(B,s)\},$$ where $\nu(g,s)$ denotes the vanishing order of the function $g$ at $s$. The {\sl tangency order} of $\mathcal{F}$ with a generic line passing through $s$ is the integer
$$\hspace{0.8cm}\tau(\mathcal{F},s)=\min\{k\geq\nu(\mathcal{F},s)\hspace{1mm}\colon\det(J^{k}_{s}\,\mathrm{X},\mathrm{R}_{s})\neq0\},$$ where $J^{k}_{s}\,\mathrm{X}$ denotes the $k$-jet of $\mathrm{X}$ at $s$ and $\mathrm{R}_{s}$ is the radial vector field centered at $s.$ The \textsl{\textsc{Milnor} number} of $\F$ at $s$ is the integer $$\mu(\mathcal{F},s)=\dim_\mathbb{C}\mathcal{O}_s/\langle A,B\rangle,$$ where $\langle A,B\rangle$ denotes the ideal of $\mathcal{O}_s$ generated by $A$ and $B.$
\smallskip

\noindent The singularity $s$ is called {\sl radial of order} $n-1$ if $\nu(\mathcal{F},s)=1$ and $\tau(\mathcal{F},s)=n.$
\smallskip

\noindent The singularity $s$ is called {\sl non-degenerate} if  $\mu(\F,s)=1$, or equivalently if the linear part  $J^{1}_{s}\mathrm{X}$ of $\mathrm{X}$ possesses two non-zero eigenvalues $\lambda,\mu.$ In this case, the quantity  $\mathrm{BB}(\F,s)=\frac{\lambda}{\mu}+\frac{\mu}{\lambda}+2$ is called the {\sl  \textsc{Baum-Bott} invariant} of $\F$ at $s$ (\emph{see} \cite{BB72}). By \cite{CS82} there is at least a germ of curve
$\mathcal{C}$ at $s$ which is invariant by $\F$. Up to local diffeomorphism we can assume that $s=(0,0)$\, $\mathrm{T}_{s}\mathcal{C}=\{\mathrm{v}=\,0\,\}$ and $J^{1}_{s}\mathrm{X}=\lambda \mathrm{u}\frac{\partial}{\partial\mathrm{u}}+(\varepsilon \mathrm{u}+\mu\hspace{0.1mm}\mathrm{v})\frac{\partial}{\partial \mathrm{v}}$, where we can take $\varepsilon=0$ if $\lambda\neq\mu$. The quantity $\mathrm{CS}(\F,\mathcal{C},s)=\frac{\lambda}{\mu}$ is called the {\sl \textsc{Camacho-Sad} index} of $\F$ at $s$ along~$\mathcal{C}.$
\medskip

\noindent Let us also recall the notion of inflection divisor of $\F$. Let $\mathrm{Z}=E\frac{\partial}{\partial x}+F\frac{\partial}{\partial y}+G\frac{\partial}{\partial z}$ be a homogeneous vector field of degree $d$ on  $\mathbb{C}^3$ non collinear to the radial vector field describing $\mathcal{F},$ {\it i.e.} such that $\omega=i_{\mathrm{R}}i_{\mathrm{Z}}\mathrm{d}x\wedge\mathrm{d}y\wedge\mathrm{d}z.$ The {\sl inflection divisor} of $\mathcal{F}$, denoted by $\IF$, is the divisor of $\pp$ defined by the homogeneous equation
\begin{equation*}
\left| \begin{array}{ccc}
x &  E &  \mathrm{Z}(E) \\
y &  F &  \mathrm{Z}(F)  \\
z &  G &  \mathrm{Z}(G)
\end{array} \right|=0.
\end{equation*}
This divisor has been studied in \cite{Per01} in a more general context. In particular, the following properties has been proved.
\begin{enumerate}
\item [\texttt{1.}] On $\mathbb{P}^{2}_{\mathbb{C}}\smallsetminus\mathrm{Sing}\mathcal{F},$ $\IF$ coincides with the curve described by the inflection
points of the leaves of $\mathcal{F}$;
\item [\texttt{2.}] If $\mathcal{C}$ is an irreducible algebraic curve invariant by $\mathcal{F}$ then $\mathcal{C}\subset\IF$ if and only if $\mathcal{C}$ is an invariant line;
\item [\texttt{3.}] $\IF$ can be decomposed into $\IF=\IinvF+\ItrF,$ where the support of $\IinvF$ consists in the set of invariant lines of $\F$ and the support of $\ItrF$ is the closure of the isolated inflection points along the leaves of  $\mathcal{F}$;
\item [\texttt{4.}] The degree of the divisor $\IF$ is $3d.$
\end{enumerate}

\noindent The foliation $\mathcal{F}$ will be called {\sl convex} if its inflection divisor $\IF$ is totally invariant by $\mathcal{F}$, {\it i.e.} if $\IF$
is a product of invariant lines.

\subsection{Geometry of homogeneous foliations}

A foliation of degree $d$ on $\pp$ is said to be {\sl homogeneous} if there is an affine chart $(x,y)$ of $\mathbb{P}^{2}_{\mathbb{C}}$ in which it is invariant under the action of the group of homotheties $(x,y)\longmapsto \lambda(x,y),\hspace{1mm} \lambda\in \mathbb{C}^{*}.$ Such a foliation $\mathcal{H}$ is then defined by a $1$-form $$\omega=A(x,y)\mathrm{d}x+B(x,y)\mathrm{d}y,$$ where $A$ and $B$ are homogeneous polynomials of degree $d$ without common factor. This $1$-form writes in homogeneous coordinates as
\begin{align*}
&\hspace{2mm} z\hspace{0.3mm}A(x,y)\mathrm{d}x+z\hspace{0.2mm}B(x,y)\mathrm{d}y-\left(x\hspace{0.2mm}A(x,y)+yB(x,y)\right)\mathrm{d}z.
\end{align*}
Thus the foliation $\mathcal{H}$ has at most $d+2$ singularities whose origin $O$ of the affine chart $z=1$ is the only singular point of $\mathcal{H}$ which is not situated on the line at infinity $L_{\infty}=\{z=0\}$; moreover $\nu(\mathcal{H},O)=d.$

\noindent In the sequel we will assume that $d$ is greater than or equal to $2.$ In this case the point $O$ is the only singularity of $\mathcal{H}$ having vanishing order $d.$

\noindent We know from~\cite{BM17} that the inflection divisor of $\mathcal{H}$ is given by $z\Cinv\Dtr=0,$ where $\Cinv=xA+yB\in\mathbb{C}[x,y]_{d+1}$ denotes the {\sl tangent cone} of $\mathcal{H}$ at the origin $O$ and $\Dtr=\frac{\partial A}{\partial x}\frac{\partial B}{\partial y}-\frac{\partial A}{\partial y}\frac{\partial B}{\partial x}\in\mathbb{C}[x,y]_{2d-2}.$ From this we deduce~that:
\begin{enumerate}
\item [(i)]  the support of the divisor $\IinvH$ consists of the lines of the tangent cone $\Cinv=0$ and the line at infinity~$L_{\infty}$~;
\item [(ii)] the divisor $\ItrH$ decomposes as $\ItrH=\prod_{i=1}^{n}T_{i}^{\rho_{i}-1}$ for some number $n\leq \deg\Dtr=2d-2$ of lines $T_{i}$ passing through $O,$
    $\rho_{i}-1$ being the inflection order of the line $T_{i}.$
\end{enumerate}

\begin{pro}[\cite{BM17}, \rm{Proposition~2.2}]\label{pro:SingH}
{\sl With the previous notations, for any point $s\in\mathrm{Sing}\mathcal{H}\cap L_{\infty}$, we~have

\noindent\textbf{\textit{1.}} $\nu(\mathcal{H},s)=1;$

\noindent\textbf{\textit{2.}} the line joining the origin $O$ to the point $s$ is invariant by $\mathcal{H}$ and it appears with multiplicity $\tau(\mathcal{H},s)-1$ in the divider $\Dtr=0,$ {\it i.e.} $$\Dtr=\ItrH\prod_{s\in\mathrm{Sing}\mathcal{H}\cap L_{\infty}}L_{s}^{\tau(\mathcal{H},s)-1}.$$
}
\end{pro}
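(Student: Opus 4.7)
My plan is to move to an affine chart adapted to $s\in\Sing\mathcal{H}\cap L_\infty$, compute the linear part of the defining vector field there, and then compare $\Cinv$ and $\Dtr$ along the line $L_s$ joining $O$ to $s$. By a linear change of coordinates I may assume $s=[a:1:0]$; passing to the chart $y=1$ with coordinates $(x,z)$, the foliation $\mathcal{H}$ is defined by
\[
\omega = z\tilde{A}(x)\,\mathrm{d}x-\bigl(x\tilde{A}(x)+\tilde{B}(x)\bigr)\mathrm{d}z,\qquad \tilde{A}(x):=A(x,1),\ \tilde{B}(x):=B(x,1),
\]
with dual vector field $\X=\Cinv(x,1)\,\partial_x+z\tilde{A}(x)\,\partial_z$. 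The point $s$ corresponds to $(x,z)=(a,0)$ and is characterised by $\Cinv(a,1)=0$.

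\textbf{Assertion (1).} Shifting $x\mapsto a+x$ to place $s$ at the origin, the linear part of $\X$ reads $\Cinv'(a,1)\,x\,\partial_x+\tilde A(a)\,z\,\partial_z$. If $\nu(\mathcal{H},s)\geq 2$ this linear part vanishes, forcing in particular $\tilde A(a)=A(a,1)=0$; combined with $a\tilde A(a)+\tilde B(a)=\Cinv(a,1)=0$ this gives $\tilde B(a)=B(a,1)=0$, so the linear form $(x-ay)$ divides both $A$ and $B$, contradicting the coprimality hypothesis. Hence $\nu(\mathcal{H},s)=1$.

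\textbf{Assertion (2).} The invariance of $L_s=\{x-ay=0\}$ follows at once from the identity $\det(B\partial_x-A\partial_y,\,x\partial_x+y\partial_y)=\Cinv$: invariant lines through $O$ are precisely the linear factors of $\Cinv$, and $L_s$ is one of them since $\Cinv(a,1)=0$. For the multiplicity claim, with $\mathrm{R}_s=x\partial_x+z\partial_z$ a direct computation gives
\[
\det(\X,\mathrm{R}_s)=z\bigl[\Cinv(a+x,1)-x\tilde A(a+x)\bigr]=z\bigl[a\tilde A(a+x)+\tilde B(a+x)\bigr].
\]
Let $m$ denote the order of vanishing of $f(x):=a\tilde A(x)+\tilde B(x)$ at $x=a$; the right-hand side factors as $x^{m}z\cdot(\text{unit})$, hence vanishes to order $m+1$ at the origin. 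By the jet definition of $\tau$ (the smallest $k$ for which the truncation to degree $k+1$ of $\det(\X,\mathrm{R}_s)$ is non-zero), this yields $\tau(\mathcal{H},s)=m$.

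It remains to show that the multiplicity of $(x-a)$ in $\Dtr(x,1)$ equals $m-1$. Euler's identity $xA_x+yA_y=dA$ specialised at $y=1$, together with its analogue for $B$, simplifies the Jacobian to $\Dtr(x,1)=d\bigl(\tilde A'(x)\tilde B(x)-\tilde A(x)\tilde B'(x)\bigr)$. Writing $\tilde B(x)=-a\tilde A(x)+(x-a)^{m}h(x)$ with $h(a)\neq 0$ and substituting yields
\[
\tilde A'\tilde B-\tilde A\tilde B'=(x-a)^{m-1}\bigl[(x-a)(\tilde A'h-\tilde A h')-m\tilde A h\bigr],
\]
whose bracket at $x=a$ is $-m\tilde A(a)h(a)\neq 0$ by Assertion (1). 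Hence $(x-a)$ divides $\Dtr(x,1)$ with multiplicity exactly $m-1=\tau(\mathcal{H},s)-1$. Running this over every $s\in\Sing\mathcal{H}\cap L_\infty$ and recalling that the residual factors of $\Dtr$—those corresponding to non-invariant lines through $O$—constitute $\ItrH$ by item (ii) above yields the displayed factorisation.

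The main obstacle I expect is the interplay between the jet-based definition of $\tau$ and the vanishing order of $\det(\X,\mathrm{R}_s)$; once this shift-by-one accounting is pinned down, the rest is Euler's identity together with a single Taylor expansion at $s$.
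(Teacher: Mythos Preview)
The paper does not supply its own proof of this proposition: it is quoted verbatim from \cite{BM17} (Proposition~2.2) and used as input, so there is no in-paper argument to compare against. That said, your proof is correct and self-contained. The chart computation in $y=1$ is exactly the right move; the identification $\tau(\mathcal{H},s)=m$ via the vanishing order of $\det(\X,\mathrm{R}_s)=z\,f(a+x)$ is clean, and the Euler-identity reduction $\Dtr(x,1)=d(\tilde A'\tilde B-\tilde A\tilde B')$ followed by the substitution $\tilde B=-a\tilde A+(x-a)^m h$ pins the multiplicity precisely. One tiny sharpening: when you invoke ``by Assertion~(1)'' to get $\tilde A(a)\neq 0$, what you are really using is the intermediate step of that proof---namely that $\tilde A(a)=0$ together with $\Cinv(a,1)=0$ forces $\tilde B(a)=0$ and hence a common factor---rather than the conclusion $\nu(\mathcal{H},s)=1$ itself; it would read more transparently to state this implication directly. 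Also note that your normalization $s=[a:1:0]$ excludes $s=[1:0:0]$; this is harmless since a linear swap of $x$ and $y$ preserves the homogeneous setup, but it is worth a word.
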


\begin{defin}[\cite{BM17}]\label{def:type-homog}
Let $\mathcal{H}$ be a homogeneous foliation of degree $d$ on $\pp$ having a certain number $m\leq d+1$ of radial singularities $s_{i}$ of order $\tau_{i}-1,$ $2\leq\tau_{i}\leq d$ for $i=1,2,\ldots,m.$ The support of the divisor $\ItrH$ consists of a certain number $n\leq2d-2$ of transverse inflection lines $T_{j}$ of order $\rho_{j}-1,$ $2\leq\rho_{j}\leq d$ for $j=1,2,\ldots,n.$ We define the {\sl type of the foliation} $\mathcal{H}$ by
$$
\mathcal{T}_\mathcal{H}=\sum\limits_{i=1}^{m}\mathrm{R}_{\tau_{i}-1}+\sum\limits_{j=1}^{n}\mathrm{T}_{\rho_{j}-1}=
\sum\limits_{k=1}^{d-1}(r_{k}\cdot\mathrm{R}_k+t_{k}\cdot\mathrm{T}_k)
\in\Z\left[\mathrm{R}_1,\mathrm{R}_2,\ldots,\mathrm{R}_{d-1},\mathrm{T}_1,\mathrm{T}_2,\ldots,\mathrm{T}_{d-1}\right].
$$
\end{defin}
\smallskip

\begin{eg}
Let us consider the homogeneous foliation $\mathcal{H}$ of degree $5$ on $\pp$ defined by $$\omega=y^5\mathrm{d}x+2x^3(3x^2-5y^2)\mathrm{d}y.$$ A straightforward computation leads to
\begin{align*}
&& \mathrm{C}_{\mathcal{H}}=xy\left(6x^4-10x^2y^2+y^4\right) \qquad\text{and}\qquad \Dtr=150\hspace{0.15mm}x^2y^4(x-y)(x+y).
\end{align*}
We see that the set of radial singularities of $\mathcal{H}$ consists of the two points $s_1=[0:1:0]$ and $s_2=[1:0:0]$; their orders of radiality are equal to $2$ and $4$ respectively. Moreover the support of the divisor $\ItrH$ is the union of the two lines $x-y=0$ and $x+y=0$; they are transverse inflection lines of order~$1.$ Therefore the foliation $\mathcal{H}$ is of type $\mathcal{T}_\mathcal{H}=1\cdot\mathrm{R}_{2}+1\cdot\mathrm{R}_{4}+2\cdot\mathrm{T}_{1}.$
\end{eg}

\noindent Following \cite{BM17} to every homogeneous foliation $\mathcal{H}$ of degree $d$ on $\pp$ is associated a rational map from the \textsc{Riemann} sphere $\mathbb{P}^{1}_{\mathbb{C}}$ to itself of degree $d$ denoted by $\Gunderline_{\mathcal{H}}$ and defined as follows: if $\mathcal{H}$ is described by $\omega=A(x,y)\mathrm{d}x+B(x,y)\mathrm{d}y,$ with $A$ and $B$ being homogeneous polynomials of degree $d$ without common factor, the image of the point $[x:y]\in\mathbb{P}^{1}_{\mathbb{C}}$ by $\Gunderline_{\mathcal{H}}$ is the point $[-A(x,y):B(x,y)]\in\mathbb{P}^{1}_{\mathbb{C}}.$ It is clear that this definition does not depend on the choice of the homogeneous $1$-form $\omega$ describing the foliation $\mathcal{H}.$ Notice that the map $\Gunderline_{\mathcal{H}}$ has the following properties (\emph{see} \cite{BM17}):
\begin{enumerate}
\item [\texttt{1.}] the fixed points of $\Gunderline_{\mathcal{H}}$ correspond to the tangent cone of $\mathcal{H}$ at the origin $O$ (\textit{i.e.} $[a:b]\in\mathbb{P}^{1}_{\mathbb{C}}$ is fixed by $\Gunderline_{\mathcal{H}}$ if and only if the line $by-a\hspace{0.2mm}x=0$ is invariant by $\mathcal{H}$\hspace{0.25mm});

\item [\texttt{2.}] the point $[a:b]\in\mathbb{P}^{1}_{\mathbb{C}}$ is a fixed critical point of $\Gunderline_{\mathcal{H}}$ if and only if the point $[b:a:0]\in L_{\infty}$ is a radial singularity of $\mathcal{H}$. The multiplicity of the critical point $[a:b]$ of $\Gunderline_{\mathcal{H}}$ is exactly equal to the the radiality order of the singularity at infinity;

\item [\texttt{3.}] the point $[a:b]\in\mathbb{P}^{1}_{\mathbb{C}}$ is a non-fixed critical point of  $\Gunderline_{\mathcal{H}}$ if and only if the line $by-a\hspace{0.2mm}x=0$ is a transverse inflection line of  $\mathcal{H}.$ The multiplicity of the critical point $[a:b]$ of $\Gunderline_{\mathcal{H}}$ is precisely equal to the inflection order of this line.
\end{enumerate}
\smallskip

\noindent It follows, in particular, that a homogeneous foliation  $\mathcal{H}$ on $\pp$ is convex if and only if its associated map~$\Gunderline_{\mathcal{H}}$ has only fixed critical points; more precisely, a homogeneous foliation $\mathcal{H}$ of degree $d$ on $\pp$ is convex of type  $\mathcal{T}_\mathcal{H}=\sum_{k=1}^{d-1}r_{k}\cdot\mathrm{R}_k$ if and only if the map $\Gunderline_{\mathcal{H}}$ possesses  $r_{1}$, resp.~$r_{2},\ldots,$ resp.~$r_{d-1}$ fixed critical points of multiplicity $1$, resp. $2\ldots,$ resp. $d-1,$ with $\sum_{k=1}^{d-1}kr_{k}=2d-2.$

\section{Proof of Theorem~\ref{thmalph:class-homogene-convexe-4}}

\bigskip

\noindent Before proving  Theorem~\ref{thmalph:class-homogene-convexe-4}, let us recall the next result which follows from Propositions~$4.1$~and~$4.2$ of~\cite{BM17}:
\begin{pro}[\cite{BM17}]\label{pro:omega1-omega3}
{\sl Let  $\mathcal{H}$ be a convex homogeneous foliation of degree  $d\geq3$ on $\pp$. Let $\nu$ be an integer between $1$ and $d-2.$ Then, $\mathcal{H}$ is of type
\[
\hspace{-1.7cm}\mathcal{T}_{\mathcal{H}}=2\cdot\mathrm{R}_{d-1},
\hspace{0.6cm}{\qquad\qquad\text{resp}.\hspace{1.5mm}
\mathcal{T}_{\mathcal{H}}=1\cdot\mathrm{R}_{\nu}+1\cdot\mathrm{R}_{d-\nu-1}+1\cdot\mathrm{R}_{d-1}},
\]
if and only if it is linearly conjugated to the foliation
 $\mathcal{H}_{1}^{d}$, resp. $\mathcal{H}_{3}^{d,\nu}$ given by
\[
\omega_1^{\hspace{0.2mm}d}=y^d\mathrm{d}x-x^d\mathrm{d}y,
\qquad\qquad\text{resp}.\hspace{1.5mm}
\omega_{3}^{\hspace{0.2mm}d,\nu}=\sum\limits_{i=\nu+1}^{d}\binom{{d}}{{i}}x^{d-i}y^i\mathrm{d}x-\sum\limits_{i=0}^{\nu}\binom{{d}}{{i}}x^{d-i}y^i\mathrm{d}y.
\]
}
\end{pro}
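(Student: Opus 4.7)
The plan is to work via the associated rational map $\Gunderline_{\mathcal{H}}\colon\sph\to\sph$ of degree~$d$. Because a homogeneous foliation distinguishes the origin $O$ (as the unique $d$-fold singularity) and the line at infinity $L_\infty$ (an invariant line), any linear conjugacy between two homogeneous foliations must preserve this pair, hence lies in the stabilizer of $(O,L_\infty)$ inside $\mathrm{Aut}(\pp)$, which is $\mathrm{GL}_2(\mathbb{C})$ acting on the affine chart. Under this $\mathrm{GL}_2(\mathbb{C})$-action, $\Gunderline_{\mathcal{H}}$ transforms by Möbius conjugation. By the dictionary recalled in Section~1.2, the type $\mathcal{T}_{\mathcal{H}}$ of a convex homogeneous $\mathcal{H}$ records exactly the multiplicities of the critical points of $\Gunderline_{\mathcal{H}}$, which are moreover all fixed. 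Thus the proposition reduces to showing that, up to Möbius conjugation, a degree-$d$ rational self-map of $\sph$ with the prescribed fixed-critical-point data is unique, and then identifying the normal form with the map of $\omega_{1}^{d}$ or $\omega_{3}^{d,\nu}$.

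For the type $\mathcal{T}_{\mathcal{H}}=2\cdot\mathrm{R}_{d-1}$, the map $\Gunderline_{\mathcal{H}}$ has exactly two fixed critical points of multiplicity $d-1$, which saturate the Riemann--Hurwitz bound $2d-2$. A Möbius transformation places them at $0$ and $\infty$; a totally ramified degree-$d$ map fixing both is necessarily of the form $z\mapsto cz^{d}$, and a further conjugation $z\mapsto\lambda z$ normalizes $c=1$. A direct computation shows that $z\mapsto z^{d}$ is exactly the map of $\omega_{1}^{d}$.

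For the type $\mathcal{T}_{\mathcal{H}}=\mathrm{R}_{\nu}+\mathrm{R}_{d-\nu-1}+\mathrm{R}_{d-1}$, send the three fixed critical points by a Möbius transformation to $0$, $\infty$, $-1$, with respective multiplicities $\nu$, $d-\nu-1$, $d-1$. Writing $\Gunderline_{\mathcal{H}}(t)=P(t)/Q(t)$ with $\gcd(P,Q)=1$, the local behaviour at $\infty$ gives $\deg P=d$ and $\deg Q=\nu$; full ramification at $-1$ combined with $\Gunderline_{\mathcal{H}}(-1)=-1$ forces $P+Q=c(t+1)^{d}$ for some $c\in\mathbb{C}^{*}$; and ramification of order $\nu$ at $0$ with $\Gunderline_{\mathcal{H}}(0)=0$ forces $P(t)=t^{\nu+1}\widetilde{P}(t)$. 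Matching coefficients in $P+Q=c(t+1)^{d}$ then yields the unique solution
\[
P(t)=c\sum_{i=\nu+1}^{d}\binom{d}{i}t^{i},\qquad Q(t)=c\sum_{i=0}^{\nu}\binom{d}{i}t^{i},
\]
which are precisely (up to the common scalar $c$) the polynomials $A(1,t)$ and $-B(1,t)$ attached to $\omega_{3}^{d,\nu}$. Hence $\mathcal{H}$ is linearly conjugate to $\mathcal{H}_{3}^{d,\nu}$.

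The main obstacle is the three-critical-point case: one must justify the specific assignment of the multiplicities $\nu,d-\nu-1,d-1$ to the normalized positions $0,\infty,-1$ compatible with the algebraic shape of $\omega_{3}^{d,\nu}$ (the alternative assignment $\nu\leftrightarrow d-\nu-1$ produces the linearly conjugate foliation $\mathcal{H}_{3}^{d,d-\nu-1}$, reflected by the swap $x\leftrightarrow y$), and then perform the coefficient extraction that makes $(P,Q)$ unique. The converse direction---that $\mathcal{H}_{1}^{d}$ and $\mathcal{H}_{3}^{d,\nu}$ are themselves convex of the stated types---is a routine computation of the tangent cone $\Cinv$ and the discriminant $\Dtr$, followed by an application of Proposition~\ref{pro:SingH} to read off the radial orders from the multiplicities of the invariant-line factors appearing in $\Dtr$.
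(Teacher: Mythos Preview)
The paper does not give its own proof of this proposition: it is explicitly recalled as a consequence of Propositions~4.1 and~4.2 of \cite{BM17}. Your argument via the associated rational map $\Gunderline_{\mathcal{H}}$ is correct and is precisely the method of \cite{BM17} summarized in \S1.2, and the same one the present paper applies to the remaining types in the proof of Theorem~\ref{thmalph:class-homogene-convexe-4}. In particular, your extraction of $P$ and $Q$ from the identity $P+Q=c(t+1)^{d}$ together with $t^{\nu+1}\mid P$ and $\deg Q=\nu$ is exactly the computation that pins down $\omega_{3}^{d,\nu}$.

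One small caveat: the assertion that ``$\Gunderline_{\mathcal{H}}$ transforms by M\"obius conjugation'' under $\mathrm{GL}_{2}(\mathbb{C})$ is slightly imprecise as stated (the induced action on the pair $(A,B)$ involves both a change of variables and a linear recombination, so source and target of $\Gunderline_{\mathcal{H}}$ do not transform by the same M\"obius map). What you actually use, and what is true, is that $\mathrm{GL}_{2}(\mathbb{C})$ acts as $\mathrm{PGL}_{2}(\mathbb{C})$ on the set of lines through $O$, hence $3$-transitively on the radial singularities; via the dictionary of \S1.2 this lets you place the fixed critical points of $\Gunderline_{\mathcal{H}}$ at $0,\infty,-1$, after which your uniqueness argument determines $\Gunderline_{\mathcal{H}}$ and therefore the foliation. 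With this rephrasing the proof is complete.
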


\begin{proof}[\sl Proof of Theorem~\ref{thmalph:class-homogene-convexe-4}]
Let  $\mathcal{H}$  be a convex homogeneous foliation of degree $4$
on $\pp$, defined in the affine chart  $(x,y)$, by the $1$-form $$\hspace{1cm}\omega=A(x,y)\mathrm{d}x+B(x,y)\mathrm{d}y,\quad A,B\in\mathbb{C}[x,y]_4,\hspace{2mm}\gcd(A,B)=1.$$
By \cite[Remark~2.5]{Bed17} the foliation  $\mathcal{H}$ can not have  $4+1=5$ distinct radial singularities,  in other words it can not be of type  $4\cdot\mathrm{R}_{1}+1\cdot\mathrm{R}_{2}.$
We are then in one of the following situations:
\begin{align*}
&\mathcal{T}_{\mathcal{H}}=2\cdot\mathrm{R}_{3}\hspace{0.5mm};&&
\mathcal{T}_{\mathcal{H}}=1\cdot\mathrm{R}_{1}+1\cdot\mathrm{R}_{2}+1\cdot\mathrm{R}_{3}\hspace{0.5mm};&&
\mathcal{T}_{\mathcal{H}}=3\cdot\mathrm{R}_{2}\hspace{0.5mm};\\
&\mathcal{T}_{\mathcal{H}}=2\cdot\mathrm{R}_{1}+2\cdot\mathrm{R}_{2}\hspace{0.5mm};&&
\mathcal{T}_{\mathcal{H}}=3\cdot\mathrm{R}_{1}+1\cdot\mathrm{R}_{3}.
\end{align*}
\begin{itemize}
\item [$\bullet$] If $\mathcal{T}_{\mathcal{H}}=2\cdot\mathrm{R}_{3}$, resp. $\mathcal{T}_{\mathcal{H}}=1\cdot\mathrm{R}_{1}+1\cdot\mathrm{R}_{2}+1\cdot\mathrm{R}_{3}$, then by \cite[Propositions~4.1,~4.2]{BM17}, the $1$-form $\omega$ is linearly conjugated to
    \[
      \hspace{-9.1cm}\omega_1^{\hspace{0.2mm}4}=y^4\mathrm{d}x-x^4\mathrm{d}y=\omega_1,
    \]
    \[
      \text{resp}.\hspace{1.5mm}
      \omega_{3}^{\hspace{0.2mm}4,1}=\,\sum\limits_{i=2}^{4}\binom{{4}}{{i}}x^{4-i}y^i\mathrm{d}x-
      \sum\limits_{i=0}^{1}\binom{{4}}{{i}}x^{4-i}y^i\mathrm{d}y\,=\,y^2(6x^2+4xy+y^2)\mathrm{d}x-x^3(x+4y)\mathrm{d}y\,=\,\omega_3.
    \]

\item [$\bullet$] Assume that $\mathcal{T}_{\mathcal{H}}=3\cdot\mathrm{R}_{2}$.
This means that the rational map $\Gunderline_{\mathcal{H}}\hspace{1mm}\colon\mathbb{P}^{1}_{\mathbb{C}}\rightarrow \mathbb{P}^{1}_{\mathbb{C}}$, $\Gunderline_{\mathcal{H}}(z)~=~-\dfrac{A(1,z)}{B(1,z)},$~admits three different fixed critical  points of multiplicity $2$.
By \cite[page~79]{CGNPP15}, $\Gunderline_{\mathcal{H}}$ is conjugated by a \textsc{Möbius} transformation to $z\mapsto-\dfrac{z^3(2-z)}{1-2z}$.
As a consequence, $\omega$ is linearly conjugated to $$\omega_2=y^3(2x-y)\mathrm{d}x+x^3(x-2y)\mathrm{d}y.$$
\vspace{2mm}
\item [$\bullet$] Assume that $\mathcal{T}_{\mathcal{H}}=2\cdot\mathrm{R}_{1}+2\cdot\mathrm{R}_{2}$. Then the rational map $\Gunderline_{\mathcal{H}}$ possesses four fixed critical points, two of them having multiplicity $1$ and the other two having multiplicity $2$. This implies, by \cite[page~79]{CGNPP15}, that up to conjugation by a \textsc{Möbius} transformation, $\Gunderline_{\mathcal{H}}$ writes as
    \[
    \hspace{1cm}z\mapsto-\frac{z^3(2z+3cz-4c-3)}{z+c},
    \]
     where $c=-3/8\pm\sqrt{5}/8$. Thus, up to linear conjugation
    \[
    \hspace{1cm}\omega=y^3(2y+3cy-4cx-3x)\mathrm{d}x+x^3(y+cx)\mathrm{d}y,\qquad c=-\frac{3}{8}\pm\frac{\sqrt{5}}{8}.
    \]
    In both cases  ($c=-3/8+\sqrt{5}/8$ or $c=-3/8-\sqrt{5}/8$), the $1$-form $\omega$ is linearly  conjugated to
     \[
     \omega_4=y^3(4x+y)\mathrm{d}x+x^3(x+4y)\mathrm{d}y\hspace{0.5mm}.
     \]
Indeed,
     \[
       \hspace{0.5cm}\omega_{4}=\frac{\raisebox{-0.5mm}{$3c+2$}}{2}\varphi^*\omega,\quad \text{where}\hspace{1.5mm} \varphi=(2x,8cy).
     \]
\item [$\bullet$] Finally, consider the last situation: $\mathcal{T}_{\mathcal{H}}=3\cdot\mathrm{R}_{1}+1\cdot\mathrm{R}_{3}.$  Up to linear conjugation we can assume that   $\Dtr=cx^3y(y-x)(y-\alpha x)$ and $\Cinv(0,1)=\Cinv(1,0)=\Cinv(1,1)=\Cinv(1,\alpha)=0$, for some  $c,\alpha\in \mathbb{C}^*,\alpha\neq1$. The  points $\infty=[1:0],\,[0:1],\,[1:1],\,[1:\alpha]\in\mathbb{P}^{1}_{\mathbb{C}}$ are then fixed and critical for $\Gunderline_{\mathcal{H}}$, having respective multiplicities  $3,1,1,1$. By \cite[Lemma~3.9]{BM17}, there exist constants $a_0,a_2,b\in\C^{*},a_1\in\C$ such that
    \begin{align*}
    &\hspace{3mm}B(x,y)=bx^4,&& A(x,y)=(a_0\hspace{0.2mm}x^2+a_1xy+a_2y^2)y^2,&&
    (z-1)^2\hspace{1mm} \text{divides}\hspace{1mm} P(z),&&
    (z-\alpha)^2\hspace{1mm} \text{divides}\hspace{1mm} Q(z),
    \end{align*}
    where $P(z):=A(1,z)+B(1,z)$ and $Q(z):=A(1,z)+\alpha B(1,z).$ It follows that
    \begin{align*}\left\{
    \begin{array}[c]{l}
    \vspace{2.8mm}
    P(1)=0\\
    \vspace{2.8mm}
    P^{\prime}(1)=0\\
    \vspace{2.8mm}
    Q(\alpha)=0\\
    Q^{\prime}(\alpha)=0
    \end{array}
    \right.
    \Leftrightarrow\hspace{2mm}
    \left\{\begin{array}{llll}
    \vspace{2.8mm}
    a_0+a_1+a_2+b=0\\
    \vspace{2.8mm}
    2a_0+3a_1+4a_2=0\\
    \vspace{2.8mm}
    a_2\alpha^3+a_1\alpha^2+a_0\alpha+b=0\\
    4a_2\alpha^2+3a_1\alpha+2a_0=0\\
    \end{array}\right.
    \Leftrightarrow\hspace{2mm}
    \left\{\begin{array}{llll}
    a_0=2a_2\alpha\\
    a_1=-\dfrac{4a_2(\alpha+1)}{3}\\
    b=-\dfrac{a_2(2\alpha-1)}{3}\\
    \alpha^2-\alpha+1=0\\
    \end{array}\right.
    \end{align*}
    By replacing $\omega$ by $\dfrac{\raisebox{-0.8mm}{$3$}}{\raisebox{1mm}{$a_2$}}\omega,$ we can assume that
    \[
    \hspace{1cm}\omega=y^2(6\alpha x^2-4(\alpha+1)xy+3y^2)\mathrm{d}x-(2\alpha-1)x^4\mathrm{d}y,\qquad \alpha^2-\alpha+1=0.
    \]
    The $1$-form $\omega$ is linearly conjugated to
    \[
     \omega_5=y^2(6x^2+4xy+y^2)\mathrm{d}x+3x^4\mathrm{d}y\hspace{0.5mm}.
    \]
    Indeed, the fact that  $\alpha$ satisfies $\alpha^2-\alpha+1=0$ implies that
    \[
    \hspace{1.9cm}\omega_{5}=\frac{\raisebox{-0.5mm}{$1-\alpha$}}{(\alpha-2)^3}\varphi^*\omega,
    \quad\text{where}\hspace{1.5mm}
    \varphi=\Big((\alpha-2)x,y\Big).
    \]
\end{itemize}

\noindent The foliations $\mathcal{H}_{\hspace{0.2mm}i},i=1,\ldots,5,$ are not linearly conjugated because, by construction, $\mathcal{T}_{\mathcal{H}_{j}}~\neq~\mathcal{T}_{\mathcal{H}_{\hspace{0.2mm}i}}$ for each $j\neq i.$ This ends the proof of the theorem.
\end{proof}
\smallskip

\noindent A remarkable feature of the classification obtained is that all the singularities of the foliations $\mathcal{H}_{\hspace{0.2mm}i},i=1,\ldots,5,$ on the line at infinity are non-degenerated. In the following section we will need the values of the \textsc{Camacho-Sad} indices $\mathrm{CS}(\mathcal{H}_{\hspace{0.2mm}i},L_{\infty},s)$, $s\in\Sing\mathcal{H}_{\hspace{0.2mm}i}\cap L_{\infty}$. For this reason, we have computed, for each $i=1,\ldots,5$, the following polynomial (called \textsl{\textsc{Camacho-Sad} polynomial of the homogeneous foliation}  $\mathcal{H}_{\hspace{0.2mm}i}$)
\begin{align*}
\mathrm{CS}_{\mathcal{H}_{\hspace{0.2mm}i}}(\lambda)=\prod\limits_{s\in\Sing\mathcal{H}_{\hspace{0.2mm}i}\cap L_{\infty}}(\lambda-\mathrm{CS}(\mathcal{H}_{\hspace{0.2mm}i},L_{\infty},s)).
\end{align*}
\noindent The following table summarizes the types and  the \textsc{Camacho-Sad} polynomials of the foliations $\mathcal{H}_{\hspace{0.2mm}i}$, $i=1,\ldots,5$.

\begingroup
\renewcommand*{\arraystretch}{1.5}
\begin{table}[h]
\begin{center}
\begin{tabular}{|c|c|c|}\hline
$i$  & $\mathcal{T}_{\mathcal{H}_{\hspace{0.2mm}i}}$              &  $\mathrm{CS}_{\mathcal{H}_{\hspace{0.2mm}i}}(\lambda)$ \\\hline
$1$  & $2\cdot\mathrm{R}_3$                                       &  $(\lambda-1)^{2}(\lambda+\frac{1}{3})^{3}$              \\\hline
$2$  & $3\cdot\mathrm{R}_2$                                       &  $(\lambda-1)^{3}(\lambda+1)^{2}$                         \\\hline
$3$  & $1\cdot\mathrm{R}_1+1\cdot\mathrm{R}_2+1\cdot\mathrm{R}_3$ &  $(\lambda-1)^{3}(\lambda+\frac{13+2\sqrt{13}}{13})(\lambda+\frac{13-2\sqrt{13}}{13})$ \\\hline
$4$  & $2\cdot\mathrm{R}_1+2\cdot\mathrm{R}_2$                    &  $(\lambda-1)^{4}(\lambda+3)$ \\\hline
$5$  & $3\cdot\mathrm{R}_1+1\cdot\mathrm{R}_3$                    &  $(\lambda-1)^{4}(\lambda+3)$ \\\hline
\end{tabular}
\end{center}
\bigskip
\caption{Types and \textsc{Camacho-Sad} polynomials of the homogeneous foliations given by Theorem~\ref{thmalph:class-homogene-convexe-4}.}\label{tab:CS(lambda)}
\end{table}
\endgroup

\section{Proof of Theorem~\ref{thmalph:convexe-reduit-4}}

\bigskip

\noindent The proof of Theorem~\ref{thmalph:convexe-reduit-4}  is based on the classification of convex homogeneous foliations of degree four on $\pp$ given by
Theorem~\ref{thmalph:class-homogene-convexe-4} and on the three following results which hold in arbitrary degree.
\smallskip

\noindent First, notice that if  $\F$ is a foliation of degree  $d\geq1$ on $\pp$ and if  $s$ is a singular point of $\F$ then  $$\sigma(\F,s)\leq\tau(\F,s)+1\leq d+1,$$ where $\sigma(\F,s)$ denotes the number of (distinct) invariant lines of $\F$ passing through $s$.

\noindent The following lemma shows that the left-hand inequality above is an equality in the case where $\F$ is a reduced convex foliation.

\begin{lem}\label{lem:sigma(F,s)}
{\sl
Let $\F$ be a reduced convex foliation of degree  $d\geq1$ on $\pp.$  Then, through each singular point $s$ of $\F$ pass exactly $\tau(\F,s)+1$ invariant lines of~$\F$, {\it i.e.} $\sigma(\F,s)=\tau(\F,s)+1.$
}
\end{lem}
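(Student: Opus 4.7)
The plan is to compute the local multiplicity $\mathrm{mult}_s(\IF)$ of the inflection divisor at $s$ in two different ways and compare the results. Since $\F$ is reduced convex of degree~$d$, the divisor $\IF = \IinvF$ is the sum of the $3d$ distinct invariant lines of~$\F$, each appearing with multiplicity one; hence its local multiplicity at~$s$ simply counts the invariant lines through~$s$, giving
\[
\mathrm{mult}_s(\IF) \;=\; \sigma(\F,s).
\]

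The technical heart of the argument is then to establish the universal lower bound
\[
\mathrm{mult}_s(\IF) \;\geq\; \tau(\F,s) + 1
\]
valid at every singular point $s$ of an arbitrary foliation on $\pp$. I would work in an affine chart $(u,v)$ centred at~$s$ in which $\F$ is defined by a vector field $\mathrm{X} = A\,\partial_u + B\,\partial_v$; the restriction of the equation of $\IF$ to this chart is proportional to $A\,\mathrm{X}(B) - B\,\mathrm{X}(A)$. Decomposing $\mathrm{X} = \mathrm{X}_\nu + \mathrm{X}_{\nu+1} + \cdots$ into homogeneous components, with $\nu = \nu(\F,s)$, the very definition of $\tau = \tau(\F,s)$ forces $\mathrm{X}_k = g_{k-1}\cdot\mathrm{R}_s$ (proportional to the radial field at $s$) for every $\nu \leq k < \tau$, while $\mathrm{X}_\tau$ is the first non-radial component, characterised by $P_{\tau+1}:=u B_\tau - v A_\tau \neq 0$. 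An expansion of $A\,\mathrm{X}(B) - B\,\mathrm{X}(A)$ that applies Euler's identity $\mathrm{R}_s(h) = (\deg h)\,h$ to each homogeneous piece isolates the leading homogeneous term as a nonzero constant multiple of $g_{\nu-1}^{\,2}\cdot P_{\tau+1}$, of degree $2(\nu-1) + (\tau+1) = 2\nu + \tau - 1$, when $\tau > \nu$; when $\tau = \nu$ the corresponding leading term is $A_\nu\,\mathrm{X}_\nu(B_\nu) - B_\nu\,\mathrm{X}_\nu(A_\nu)$ of degree $3\nu - 1$. In either case the leading degree is $\geq \tau + 1$ (because $\nu \geq 1$), yielding the desired bound.

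Combining the two expressions gives $\sigma(\F,s) = \mathrm{mult}_s(\IF) \geq \tau(\F,s)+1$; together with the elementary upper bound $\sigma(\F,s) \leq \tau(\F,s)+1$ recalled just above the statement, this forces $\sigma(\F,s) = \tau(\F,s)+1$.

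The main obstacle is the local leading-term computation: one must carefully track the cross-contributions coming from every homogeneous piece of $g := g_{\nu-1}+g_\nu+\cdots+g_{\tau-2}$ and of $\mathrm{X}_\tau$, and verify that repeated applications of Euler's identity produce precisely the cancellations needed to leave the announced nonzero leading form. Once this local identity is in place, the lemma follows immediately.
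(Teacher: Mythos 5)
Your argument is correct, and it shares the paper's overall strategy: identify $\sigma(\F,s)$ with the vanishing order at $s$ of the reduced, totally $\F$-invariant divisor $\IF$, then control that vanishing order via the local formula $A\,\mathrm{X}(B)-B\,\mathrm{X}(A)$. Where you genuinely diverge is in how the multiplicity is pinned down. The paper first invokes \cite[Lemma~6.8]{BM17} (the singularities of a reduced convex foliation are non-degenerate, so $\mu(\F,s)=1$ and in particular $\nu(\F,s)=1$) and then an ``elementary computation'' yields the exact value $\tau(\F,s)+1$. You instead establish only the one-sided bound $\mathrm{mult}_s(\IF)\geq\tau(\F,s)+1$, valid for an arbitrary singularity with $\nu\geq1$, and close the sandwich with the elementary inequality $\sigma(\F,s)\leq\tau(\F,s)+1$ recalled just before the statement. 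Your route is a bit more self-contained -- it does not need the non-degeneracy lemma at this point -- at the price of the general leading-term analysis you rightly flag as the technical heart: carrying out that polarization, the first a priori nonzero homogeneous component of $A\,\mathrm{X}(B)-B\,\mathrm{X}(A)$ is $(1+\tau-\nu)\,g_{\nu-1}^{2}\,P_{\tau+1}$, of degree $2\nu+\tau-1\geq\tau+1$, all lower components vanishing because they only involve the radial jets $\mathrm{X}_k=g_{k-1}\mathrm{R}_s$ with $k<\tau$; specialized to $\nu=1$ this is precisely the computation the paper alludes to. One small point worth making explicit in your first step: since all $3d$ distinct invariant lines must appear in $\IF$ (an invariant algebraic curve lies in $\IF$ iff it is a line) and $\deg\IF=3d$, each line indeed occurs with multiplicity exactly one, which is what justifies $\mathrm{mult}_s(\IF)=\sigma(\F,s)$. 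With that noted, both arguments are valid.
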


\begin{proof}
Let  $s$  be a singular point of $\F$. Since the inflection divisor  $\IF$ of $\F$ is totally invariant by $\F$ and it is reduced, we deduce that $\mu(\F,s)=1$ (\cite[Lemma~6.8]{BM17}) and the number  $\sigma(\F,s)$ coincides with the vanishing order of $\IF$ at $s.$ On the other hand, an elementary computation, using the equality $\mu(\F,s)=1,$ shows that the vanishing order of $\IF$ at $s$ is equal to~$\tau(\F,s)+1.$ Hence the lemma holds.
\end{proof}

\noindent The following result allows us to reduce the study of the convexity to the homogeneous framework:
\begin{pro}\label{pro:F-degenere-H}
{\sl Let $\F$ be a reduced convex foliation of degree $d\geq1$ on $\pp$ and let $\elltext$ be one of its  $3d$ invariant lines. There is a convex homogeneous foliation $\mathcal{H}$ of degree $d$ on $\pp$ satisfying the following properties:
\vspace{1mm}
\begin{itemize}
\item [(i)] $\mathcal{H}\in\overline{\mathcal{O}(\F)}$;
\vspace{0.5mm}
\item [(ii)] $\elltext$ is invariant by $\mathcal{H}$;
\vspace{0.5mm}
\item [(iii)] $\Sing\mathcal{H}\cap \elltext=\Sing\F\cap\elltext$;
\vspace{0.5mm}
\item [(iv)] $\forall\hspace{1mm}s\in\Sing\mathcal{H}\cap\elltext,
              \hspace{1mm}
              \mu(\mathcal{H},s)=1$;
\vspace{0.5mm}
\item [(v)] $\forall\hspace{1mm}s\in\Sing\mathcal{H}\cap\elltext,
             \hspace{1mm}
             \tau(\mathcal{H},s)=\tau(\mathcal{\F},s)$;
\vspace{0.5mm}
\item [(vi)] $\forall\hspace{1mm}s\in\Sing\mathcal{H}\cap\elltext,
              \hspace{1mm}
              \mathrm{CS}(\mathcal{H},\elltext,s)=\mathrm{CS}(\F,\elltext,s).$
\end{itemize}
}
\end{pro}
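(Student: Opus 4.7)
The plan is to obtain $\mathcal{H}$ as a limit in $\overline{\mathcal{O}(\F)}$ via a one-parameter family of homotheties centered at a generic affine point. After choosing affine coordinates $(x,y)$ on $\pp$ so that $\elltext=L_\infty=\{z=0\}$, the invariance of $\elltext$ by $\F$ forces its defining $1$-form to be $\omegaoverline=A(x,y)\mathrm{d}x+B(x,y)\mathrm{d}y$ with $\deg A,\deg B\leq d$. Decomposing into homogeneous parts $A=\sum_{k=0}^{d}A_{k}$, $B=\sum_{k=0}^{d}B_{k}$, I take $\mathcal{H}$ to be the homogeneous foliation given by $\omega_{\mathcal{H}}=A_{d}\mathrm{d}x+B_{d}\mathrm{d}y$, choosing the origin so that $\gcd(A_{d},B_{d})=1$ (ensuring $\mathcal{H}$ has degree~$d$). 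Setting $\varphi_{t}(x,y)=(tx,ty)\in\mathrm{Aut}(\pp)$, a direct computation shows that $\sum_{k=0}^{d}t^{k-d}(A_{k}\mathrm{d}x+B_{k}\mathrm{d}y)\to\omega_{\mathcal{H}}$ as $t\to\infty$, and this limit is a normalization of $\varphi_{t}^{*}\omegaoverline$. This yields (i) $\mathcal{H}\in\overline{\mathcal{O}(\F)}$ and, by Zariski-closedness of $\mathbf{FC}(d)$, the convexity of~$\mathcal{H}$; property (ii) is automatic for homogeneous foliations.

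For (iii), I would observe that $\Sing\F\cap L_\infty$ and $\Sing\mathcal{H}\cap L_\infty$ are both cut out on $L_\infty$ by the polynomial $xA_{d}+yB_{d}$; the hypothesis that $\F$ is reduced convex (hence all its singularities are non-degenerate) forces these $d+1$ roots to be simple, giving $\Sing\F\cap L_\infty=\Sing\mathcal{H}\cap L_\infty$ with $d+1$ points. For (iv) and (vi), I would pass to local coordinates $(u,v)$ at $s\in\Sing\F\cap L_\infty$ with $\elltext=\{v=0\}$ and write $\omegaoverline_{\F}=v\tilde{P}(u,v)\mathrm{d}u+Q(u,v)\mathrm{d}v$. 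The local $1$-form of $\mathcal{H}$ at $s$ is then obtained by freezing $v=0$ in the coefficients: $\omegaoverline_{\mathcal{H}}=v\tilde{P}(u,0)\mathrm{d}u+Q(u,0)\mathrm{d}v$. Setting $\alpha=\partial_{u}Q(0,0)$ and $\gamma=\tilde{P}(0,0)$, the two local dual vector fields share the same pair of eigenvalues $\{\alpha,-\gamma\}$: they differ only by a nilpotent shear term in the linear part, which is removable by a local change of variables preserving $\elltext$. This yields $\mu(\mathcal{H},s)=1=\mu(\F,s)$ and $\mathrm{CS}(\mathcal{H},\elltext,s)=-\alpha/\gamma=\mathrm{CS}(\F,\elltext,s)$.

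The main obstacle is (v), which I would establish by combining upper semicontinuity with a global count. Since $\varphi_{t}$ fixes $L_\infty$ pointwise, $\mathrm{I}_{\F_{t}}=\varphi_{t}^{*}\IF$ has the same vanishing order $\tau(\F,s)+1$ at $s$ as $\IF$ for every finite $t\neq0$; standard upper semicontinuity of vanishing orders in algebraic families of divisors then gives $\mathrm{ord}_{s}(\IH)\geq\tau(\F,s)+1$. Since $\mu(\mathcal{H},s)=1$ by (iv), the elementary computation invoked in the proof of Lemma~\ref{lem:sigma(F,s)} also gives $\mathrm{ord}_{s}(\IH)=\tau(\mathcal{H},s)+1$, whence the pointwise inequality $\tau(\mathcal{H},s)\geq\tau(\F,s)$. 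To upgrade it to equality, I would compare global sums on the common set $\Sing\F\cap L_\infty=\Sing\mathcal{H}\cap L_\infty$: for $\F$ reduced convex, each of the $3d-1$ invariant lines distinct from $\elltext$ meets $L_\infty$ at a singular point contributing one unit to $\sum_{s}\tau(\F,s)$, so this sum equals $3d-1$; for $\mathcal{H}$ convex, Proposition~\ref{pro:SingH} gives $\Dtr=\prod_{s}L_{s}^{\tau(\mathcal{H},s)-1}$ of degree $2d-2$, whence $\sum_{s}\tau(\mathcal{H},s)=(2d-2)+(d+1)=3d-1$. The two sums coincide, forcing the pointwise equality and establishing (v).
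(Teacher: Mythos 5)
Your construction of $\mathcal{H}$ is exactly the paper's: put $\elltext=\{z=0\}$, split $\omega=\sum_i(A_i\mathrm{d}x+B_i\mathrm{d}y)$ into homogeneous parts and keep the top one, realize it as the limit of the homothety orbit, and get convexity from the closedness of $\mathbf{FC}(d)$; the local eigenvalue analysis for (iii), (iv), (vi) and the global count $\sum_s\tau(\F,s)=3d-1=\sum_s\tau(\mathcal{H},s)$ for (v) also match the paper (which delegates the local part to the argument of \cite[Proposition~6.4]{BM17}). The one genuinely different step is how you obtain the pointwise inequality in (v): you prove $\tau(\mathcal{H},s)\geq\tau(\F,s)$ by upper semicontinuity of the vanishing order of the inflection divisor along the family $\varphi_t^*\IF\to\IH$, using $\mathrm{ord}_s\hspace{0.3mm}\IF=\tau(\F,s)+1$ when $\mu=1$. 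The paper instead proves the same inequality by a two-line jet computation in the chart at $s$: $\tau(\F,s)=\min\{k:J^k_{(0,0)}(\sum_iz^{d-i}B_i(x,1))\neq0\}\leq\min\{k:J^k_0(B_d(x,1))\neq0\}=\tau(\mathcal{H},s)$, since the $z$-free part of the first jet is the second. Your route is valid but invokes heavier machinery (convergence of the normalized defining polynomials of the inflection divisors, plus the order formula from Lemma~\ref{lem:sigma(F,s)} twice); the paper's is more elementary and self-contained. Either way the conclusion follows from the equality of the global sums.

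There is, however, one flawed justification you should repair: you claim to arrange $\gcd(A_d,B_d)=1$ by \og choosing the origin\fg. This cannot work: a translation of the affine chart leaves the top homogeneous parts $A_d,B_d$ unchanged, and a linear change of coordinates preserves the degree of their gcd, so no choice of origin or affine frame affects this condition. The condition is nonetheless essential --- without it $\omega_d$ defines a foliation of degree $<d$, the limit leaves $\mathbf{F}(d)$, and property (i) becomes meaningless. The correct argument is the one implicit in your own local computation: at a point $s=[x_0:y_0:0]$ of $\Sing\F\cap\elltext$ (a zero of $x_0A_d+y_0B_d$), the two eigenvalues of the linear part of $\F$ are, up to normalization, $A_d(x_0,y_0)$ (or $B_d(x_0,y_0)$) and the derivative of $\Cinv=xA_d+yB_d$ at $[x_0:y_0]$; a common zero of $A_d$ and $B_d$ would therefore be a degenerate singularity of $\F$ on $\elltext$, contradicting \cite[Lemma~6.8]{BM17}. (The same computation gives the simplicity of the roots of $\Cinv$ that you use for $\#\Lambda=d+1$.) With that substitution your proof is complete and essentially coincides with the paper's, apart from the semicontinuity argument noted above.
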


\begin{proof}
We take a homogeneous coordinate system $[x:y:z]\in\pp$ such that $\elltext=\{z=0\}.$ Since $\elltext$ is $\F$-invariant, $\F$ is defined in the affine chart $z=1$ by a $1$-form of the following type $$\omega=\sum_{i=0}^{d}(A_i(x,y)\mathrm{d}x+B_i(x,y)\mathrm{d}y),$$ where $A_i,\,B_i$ are homogeneous polynomials of degree $i$. Using the fact that every reduced convex foliation on $\pp$ has only non-degenerate singularities (\cite[Lemma~6.8]{BM17}) and arguing as in the proof of \cite[Proposition~6.4]{BM17}, we see that the $1$-form $\omega_d=A_d(x,y)\mathrm{d}x+B_d(x,y)\mathrm{d}y$ well defines a homogeneous foliation $\mathcal{H}$ of degree $d$ on $\pp$, and that this foliation satisfies the announced properties (i),~(ii),~(iii),~(iv) and (vi). In particular, since $\F$ is convex by hypothesis, property~(i) implies that $\mathcal{H}$ is also convex.

\noindent Let us show that $\mathcal{H}$ also satisfies property~(v). Set $\Lambda:=\Sing\mathcal{H}\cap \elltext=\Sing\F\cap\elltext$; since $\F$ possesses $3d$ invariant lines, we have  $\sum_{\scalebox{0.85}{$s\hspace{-0.8mm}\in\hspace{-0.8mm}\raisebox{-0.2mm}{$\Lambda$}$}}\big(\sigma(\F,s)-1\big)=3d-1.$
By Lemma~\ref{lem:sigma(F,s)}, this is equivalent to $\sum_{\scalebox{0.85}{$s\hspace{-0.8mm}\in\hspace{-0.8mm}\raisebox{-0.2mm}{$\Lambda$}$}}\tau(\F,s)=3d-1.$ By \cite[Proposition~2.2]{BM17} the convexity of  $\mathcal{H}$ implies that $\sum_{\scalebox{0.85}{$s\hspace{-0.8mm}\in\hspace{-0.8mm}\raisebox{-0.2mm}{$\Lambda$}$}}\tau(\mathcal{H},s)=2d-2.$ Moreover, the already proved property~(iv)
ensures that $\#\Lambda=d+1.$ It follows that
\begin{align*}
&
\sum_{\scalebox{0.85}{$s\hspace{-0.8mm}\in\hspace{-0.8mm}\raisebox{-0.2mm}{$\Lambda$}$}}\big(\tau(\F,s)-1\big)
=(3d-1)-\#\Lambda
=2d-2
=
\sum_{\scalebox{0.85}{$s\hspace{-0.8mm}\in\hspace{-0.8mm}\raisebox{-0.2mm}{$\Lambda$}$}}\big(\tau(\mathcal{H},s)-1\big).
\end{align*}

\noindent Thus, in order to see that  $\mathcal{H}$ satisfies property~(v), it is enough to prove that  $\tau(\mathcal{\F},s)\leq\tau(\mathcal{H},s)$ for each $s\in\Lambda.$
Let us fix $s\in\Lambda.$ Up to conjugating $\omega$ by a linear isomorphism of  $\mathbb{C}^2=(z=1)$, we can assume that  $s=[0:1:0].$ The foliations  $\F$ and $\mathcal{H}$ are respectively defined in the affine chart $y=1$ by the $1$-forms
\begin{align*}
&\theta=\sum_{i=0}^{d}z^{d-i}[A_i(x,1)(z\mathrm{d}x-x\mathrm{d}z)-B_i(x,1)\mathrm{d}z]
&& \text{and} &&
\theta_d=A_d(x,1)(z\mathrm{d}x-x\mathrm{d}z)-B_d(x,1)\mathrm{d}z.
\end{align*}
As a consequence
\begin{align*}
\tau(\F,s)=
\min\Big\{k\geq1\hspace{1mm}\colon J^{k}_{(0,0)}\Big(\sum_{i=0}^{d}z^{d-i}B_i(x,1)\Big)\neq0\Big\}\leq
\min\Big\{k\geq1\hspace{1mm}\colon J^{k}_{0}\big(B_d(x,1)\big)\neq0\Big\}=\tau(\mathcal{H},s).
\end{align*}
\end{proof}

\begin{rem}\label{rem:Darboux-BB}
If $\F$ is a foliation of degree  $d$ on $\pp$ then  (\emph{see} \cite{Bru00})
\begin{align}\label{equa:Darboux-BB}
&\sum_{s\in\mathrm{Sing}\F}\mu(\F,s)=d^2+d+1
&&\text{and}&&
\sum_{s\in\mathrm{Sing}\F}\mathrm{BB}(\F,s)=(d+2)^2.
\end{align}
\end{rem}

\begin{lem}\label{lem:non-radiale}
{\sl Every foliation of degree $d\geq 1$ on  $\pp$ possesses at least a non radial singularity.}
\end{lem}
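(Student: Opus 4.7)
The plan is to argue by contradiction, combining the very definition of a radial singularity with the two global formulas recalled in Remark~\ref{rem:Darboux-BB}. Suppose, for contradiction, that every $s\in\Sing\F$ is radial; by definition this means $\nu(\F,s)=1$ and $\tau(\F,s)\geq 2$ for every such $s$.

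The first step is a purely local observation: a radial singularity is automatically non-degenerate and has Baum-Bott invariant equal to $4$. Indeed, let $\mathrm{X}=A(\mathrm{u},\mathrm{v})\frac{\partial}{\partial \mathrm{u}}+B(\mathrm{u},\mathrm{v})\frac{\partial}{\partial \mathrm{v}}$ be a local generator of $\F$ at $s$, with linear part
\[
J^{1}_{s}\mathrm{X}=(a_{1}\mathrm{u}+a_{2}\mathrm{v})\tfrac{\partial}{\partial \mathrm{u}}+(b_{1}\mathrm{u}+b_{2}\mathrm{v})\tfrac{\partial}{\partial \mathrm{v}}.
\]
The condition $\tau(\F,s)\geq 2$ says that $\det(J^{1}_{s}\mathrm{X},\mathrm{R}_{s})\equiv 0$; expanding this determinant as a polynomial in $\mathrm{u},\mathrm{v}$ forces $a_{2}=b_{1}=0$ and $a_{1}=b_{2}=:\lambda$, i.e.\ $J^{1}_{s}\mathrm{X}=\lambda\,\mathrm{R}_{s}$. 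Since $\nu(\F,s)=1$, one has $\lambda\neq 0$, so the Jacobian matrix of $\mathrm{X}$ at $s$ is $\lambda\cdot\mathrm{Id}$; thus $s$ is non-degenerate (so $\mu(\F,s)=1$) with two equal non-zero eigenvalues, whence $\mathrm{BB}(\F,s)=\tfrac{\lambda}{\lambda}+\tfrac{\lambda}{\lambda}+2=4$.

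The second step is to feed this into the global identities of Remark~\ref{rem:Darboux-BB}. Setting $N:=\#\Sing\F$, the first identity yields $N=d^{2}+d+1$ (since every singularity is non-degenerate), while the second yields $4N=(d+2)^{2}$. Substituting gives
\[
4(d^{2}+d+1)=(d+2)^{2},\qquad\text{i.e.}\qquad 3d^{2}=0,
\]
which is impossible for $d\geq 1$. This contradiction shows that at least one singularity of $\F$ is non-radial.

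There is no real obstacle in this argument: the only computation that is not completely automatic is the identification of $J^{1}_{s}\mathrm{X}$ with $\lambda\mathrm{R}_{s}$ at a radial singularity, and this is a one-line expansion of the determinant in the definition of~$\tau(\F,s)$. Once that is done, the global Darboux and Baum-Bott identities do all the work.
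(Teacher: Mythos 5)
Your proof is correct and follows exactly the paper's argument: the paper likewise notes that a radial singularity satisfies $\mu(\F,s)=1$ and $\mathrm{BB}(\F,s)=4$ and then derives the contradiction $4(d^{2}+d+1)=(d+2)^{2}$ from the two formulas of Remark~\ref{rem:Darboux-BB}. You merely spell out the (correct) one-line verification that the paper leaves as ``obvious''.
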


\noindent This lemma follows from the formulas (\ref{equa:Darboux-BB}) and the obvious following remark: if a foliation $\F$ on $\pp$ admits a radial singularity $s$, then $\mu(\F,s)=1$ and $\mathrm{BB}(\F,s)=4.$

\begin{proof}[\sl Proof of Theorem~\ref{thmalph:convexe-reduit-4}]
Let $\F$ be a reduced convex foliation of degree $4$ on $\pp$. Let us denote by $\Sigma$ the set of non radial singularities of $\F$. By Lemma~\ref{lem:non-radiale}, $\Sigma$ is nonempty. Since by hypothesis $\F$ is reduced convex, all its singularities have
\textsc{Milnor} number $1$ (\cite[Lemma~6.8]{BM17}). The set $\Sigma$ consists then of the singularities $s\in\Sing\F$ such that $\tau(\F,s)=1.$ Let $m$ be a point of $\Sigma$; by Lemma~\ref{lem:sigma(F,s)}, through the point $m$ pass exactly two $\F$-invariant lines  $\elltext_{m}^{(1)}$ and  $\elltext_{m}^{(2)}$.

\noindent On the other hand, for any line $\elltext $ invariant by $\F$, Proposition~\ref{pro:F-degenere-H} ensures the existence of a convex homogeneous foliation $\mathcal{H}_{\ellindice}$ of degree $4$ on $\pp$ belonging to $\overline{\mathcal{O}(\F)}$ and such that the line $\elltext$ is $\mathcal{H}_{\ellindice}$-invariant. Therefore $\mathcal{H}_{\ellindice}$, and in particular each  $\mathcal{H}_{\ellindice_{m}^{(i)}}$, is linearly conjugated to one of the five homogeneous foliations given by Theorem~\ref{thmalph:class-homogene-convexe-4}. Proposition~\ref{pro:F-degenere-H} also ensures that
\vspace{1mm}
\begin{itemize}
\item [($\mathfrak{a}$)] $\Sing\F\cap\elltext=\Sing\mathcal{H}_{\ellindice}\cap\elltext$;
\vspace{0.5mm}
\item [($\mathfrak{b}$)] $\forall\hspace{1mm}s\in\Sing\mathcal{H}_{\ellindice}\cap\elltext,\hspace{1mm}
                          \mu(\mathcal{H}_{\ellindice},s)=1$;
\vspace{0.5mm}
\item [($\mathfrak{c}$)] $\forall\hspace{1mm}s\in\Sing\mathcal{H}_{\ellindice}\cap\elltext,\hspace{1mm}
                         \tau(\mathcal{H}_{\ellindice},s)=\tau(\F,s)$;
\vspace{0.5mm}
\item [($\mathfrak{d}$)] $\forall\hspace{1mm}s\in\Sing\mathcal{H}_{\ellindice}\cap\elltext,\hspace{1mm}
                         \mathrm{CS}(\mathcal{H}_{\ellindice},\elltext,s)=\mathrm{CS}(\F,\elltext,s)$.
\end{itemize}
\vspace{1mm}

\noindent Since $\mathrm{CS}(\F,\elltext_{m}^{(1)},m)\mathrm{CS}(\F,\elltext_{m}^{(2)},m)=1,$ relation ($\mathfrak{d}$) implies that $\mathrm{CS}(\mathcal{H}_{\ellindice_{m}^{(1)}},\elltext_{m}^{(1)},m)\mathrm{CS}(\mathcal{H}_{\ellindice_{m}^{(2)}},\elltext_{m}^{(2)},m)=1.$
This equality and Table~\ref{tab:CS(lambda)} lead to
\begin{align*}
&\{\mathrm{CS}(\mathcal{H}_{\ellindice_{m}^{(1)}},\elltext_{m}^{(1)},m),\,\mathrm{CS}(\mathcal{H}_{\ellindice_{m}^{(2)}},\elltext_{m}^{(2)},m)\}=\{-3,-\tfrac{1}{3}\}
&&\text{or}&&
\mathrm{CS}(\mathcal{H}_{\ellindice_{m}^{(1)}},\elltext_{m}^{(1)},m)=\mathrm{CS}(\mathcal{H}_{\ellindice_{m}^{(2)}},\elltext_{m}^{(2)},m)=-1.
\end{align*}

\noindent At first let us suppose that it is possible to choose $m\in\Sigma$ so that
$$
\{\mathrm{CS}(\mathcal{H}_{\ellindice_{m}^{(1)}},\elltext_{m}^{(1)},m),\,\mathrm{CS}(\mathcal{H}_{\ellindice_{m}^{(2)}},\elltext_{m}^{(2)},m)\}
=\{-3,-\tfrac{1}{3}\}.$$
By renumbering the $\elltext_{m}^{(i)}$ we can assume that $\mathrm{CS}(\mathcal{H}_{\ellindice_{m}^{(1)}},\elltext_{m}^{(1)},m)=-\frac{1}{3}$ and $\mathrm{CS}(\mathcal{H}_{\ellindice_{m}^{(2)}},\elltext_{m}^{(2)},m)=-3.$ Consulting Table~\ref{tab:CS(lambda)}, we see that
\begin{align*}
&\hspace{5mm}\mathcal{T}_{\mathcal{H}_{\ellindice_{m}^{(1)}}}=2\cdot\mathrm{R}_3,&&
\mathcal{T}_{\mathcal{H}_{\ellindice_{m}^{(2)}}}\in\Big\{ 3\cdot\mathrm{R}_1+1\cdot\mathrm{R}_3,\hspace{0.2mm}2\cdot\mathrm{R}_1+2\cdot\mathrm{R}_2\Big\}.
\end{align*}
Therefore, it follows from relations ($\mathfrak{a}$) and ($\mathfrak{c}$) that $\F$ possesses two radial singularities $m_1,m_2$ of order $3$ on the line  $\elltext_{m}^{(1)}$ and a radial singularity  $m_3$ of order $2$ or $3$ on the line $\elltext_{m}^{(2)}.$

\noindent We will see that the radiality order of the singularity $m_3$ of $\F$ is necessarily $3,$  {\it i.e.}  $\tau(\F,m_3)=4.$
By \cite[Proposition~2, page~23]{Bru00}, the fact that  $\tau(\F,m_1)+\tau(\F,m_3)\geq4+3>\deg\F$ implies the invariance by $\F$ of the line $\elltext=(m_1m_3)$; if $\tau(\F,m_3)$ were equal to $3$, then relations ($\mathfrak{a}$), ($\mathfrak{b}$) and ($\mathfrak{c}$), combined with the convexity of the foliation $\mathcal{H}_{\ellindice}$, would imply that $\mathcal{T}_{\mathcal{H}_{\ellindice}}=1\cdot\mathrm{R}_1+1\cdot\mathrm{R}_2+1\cdot\mathrm{R}_3$ so that (\emph{see} Table~\ref{tab:CS(lambda)}) $\mathcal{H}_{\ellindice}$ would possess a singularity $m'$ on the line  $\elltext$ satisfying $\mathrm{CS}(\mathcal{H}_{\ellindice},\elltext,m')\in\{-\frac{13+2\sqrt{13}}{13},-\frac{13-2\sqrt{13}}{13}\}$ which is not possible.

\noindent By construction, the three points  $m_1$, $m_2$ and $m_3$ are not aligned. We have thus shown that $\F$ admits three non-aligned radial singularities of order $3$. By~\cite[Proposition~6.3]{BM17} the foliation $\F$ is linearly conjugated to the \textsc{Fermat} foliation $\F_{0}^{4}.$
\smallskip

\noindent Let us now consider the eventuality $\mathrm{CS}(\mathcal{H}_{\ellindice_{m}^{(1)}},\elltext_{m}^{(1)},m)=\mathrm{CS}(\mathcal{H}_{\ellindice_{m}^{(2)}},\elltext_{m}^{(2)},m)=-1$
for any choice of $m\in\Sigma.$ In~this case, Table~\ref{tab:CS(lambda)} leads to  $\mathcal{T}_{\mathcal{H}_{\ellindice_{m}^{(i)}}}=3\cdot\mathrm{R}_2$ for $i=1,2.$ Then, as before, by using relations ($\mathfrak{a}$), ($\mathfrak{b}$) and ($\mathfrak{c}$), we obtain that $\F$ possesses exactly three radial singularities of order $2$ on each line  $\elltext_{m}^{(i)}$. Moreover, every line joining a radial singularity of order $2$ of $\F$ on $\elltext_{m}^{(1)}$ and a radial singularity of order $2$ of $\F$ on $\elltext_{m}^{(2)}$ must contain necessarily a third radial singularity of order~$2$~of~$\F.$
We can then choose a homogeneous coordinate system $[x:y:z]\in\pp$ so that the points $m_1=[0:0:1],\,m_2=[1:0:0]$\, and \,$m_3=[0:1:0]$ are radial singularities of order  $2$ of $\F$. Moreover, in this coordinate system the lines $x=0$, $y=0$, $z=0$ must be invariant by $\F$ and there exist $x_0,y_0,z_0\in \mathbb{C}^{*}$ such that  the points $m_4=[x_0:0:1],\,m_5=[1:y_0:0],\,m_6=[0:1:z_0]$ are radial singularities of order $2$ of $\F.$ The equalities $\nu(\F,m_1)=1$, $\tau(\F,m_1)=3$ and the invariance of the line $z=0$ by $\F$ ensure that every $1$-form $\omega$ defining $\F$ in the affine chart $z=1$ is of type
\begin{align*}
&\omega=(x\mathrm{d}y-y\mathrm{d}x)(\gamma+c_0\hspace{0.2mm}x+c_1y)+(\alpha_0\hspace{0.2mm}x^3+\alpha_1x^2y+\alpha_2xy^2+\alpha_3y^3)\mathrm{d}x
+(\beta_0\hspace{0.2mm}x^3+\beta_1x^2y+\beta_2xy^2+\beta_3y^3)\mathrm{d}y\\
&\hspace{0.7cm}+(a_0\hspace{0.2mm}x^4+a_1x^3y+a_2x^2y^2+a_3xy^3+a_4y^4)\mathrm{d}x+(b_0\hspace{0.2mm}x^4+b_1x^3y+b_2x^2y^2+b_3xy^3+b_4y^4)\mathrm{d}y,
\end{align*}
where $a_i,b_i,c_{\hspace{-0.2mm}j},\alpha_k,\beta_k\in\mathbb{C}$ and $\gamma\in\mathbb{C}^*.$
\vspace{1mm}

\noindent In the affine chart $x=1$, resp. $y=1,$ the foliation $\F$ is given by
\begin{align*}
&\theta=z^3(\gamma\hspace{0.1mm}z+c_0+c_1y)\mathrm{d}y-(\alpha_0\hspace{0.2mm}z+\alpha_1yz+\alpha_2y^2z+\alpha_3y^3z+a_0+a_1y+a_2y^2+a_3y^3+a_4y^4)\mathrm{d}z\\
&\hspace{0.62cm}-(\beta_0\hspace{0.2mm}z+\beta_1yz+\beta_2y^2z+\beta_3y^3z+b_0+b_1y+b_2y^2+b_3y^3+b_4y^4)(y\mathrm{d}z-z\mathrm{d}y),\\
&\hspace{-0.9cm}\text{resp.}\hspace{1.5mm}
\eta=-z^3(\gamma\hspace{0.1mm}z+c_0\hspace{0.2mm}x+c_1)\mathrm{d}x
-(\beta_0\hspace{0.2mm}x^3z+\beta_1x^2z+\beta_2xz+\beta_3z+b_0\hspace{0.2mm}x^4+b_1x^3+b_2x^2+b_3x+b_4)\mathrm{d}z\\
&\hspace{0.62cm}+(\alpha_0\hspace{0.2mm}x^3z+\alpha_1x^2z+\alpha_2xz+\alpha_3z+a_0\hspace{0.2mm}x^4+a_1x^3+a_2x^2+a_3x+a_4)(z\mathrm{d}x-x\mathrm{d}z).
\end{align*}
A straightforward computation shows that
\begin{small}
\begin{align*}
&\Big(J^{2}_{(y,z)=(0,0)}\theta\Big)\wedge\Big(y\mathrm{d}z-z\mathrm{d}y\Big)=-z\hspace{0.1mm}P(y,z)\mathrm{d}y\wedge\mathrm{d}z,&&
\Big(J^{2}_{(x,z)=(0,0)}\eta\Big)\wedge\Big(z\mathrm{d}x-x\mathrm{d}z\Big)=z\hspace{0.1mm}Q(x,z)\mathrm{d}x\wedge\mathrm{d}z,\\
&\Big(J^{2}_{(x,y)=(x_0,0)}\omega\Big)\wedge\Big((x-x_0)\mathrm{d}y-y\mathrm{d}x\Big)=x_0\hspace{0.1mm}R(x,y)\mathrm{d}x\wedge\mathrm{d}y,&&
\Big(J^{2}_{(y,z)=(y_0,0)}\theta\Big)\wedge\Big((y-y_0)\mathrm{d}z-z\mathrm{d}y\Big)=-z\hspace{0.1mm}S(y,z)\mathrm{d}y\wedge\mathrm{d}z,\\
&\Big(J^{2}_{(x,z)=(0,z_0)}\eta\Big)\wedge\Big((z-z_0)\mathrm{d}x-x\mathrm{d}z\Big)=T(x,z)\mathrm{d}x\wedge\mathrm{d}z
\end{align*}
\end{small}
\hspace{-1mm}with
\begin{small}
\begin{align*}
&P(y,z)=a_0+a_1y+\alpha_0z+a_2y^2+\alpha_1yz,\\
&Q(x,z)=b_4+b_3x+\beta_3z+b_2x^2+\beta_2xz,\\
&R(x,y)=-x_0^3(\alpha_0+3a_0\hspace{0.2mm}x_0)+x_0^2(4\alpha_0+11a_0\hspace{0.2mm}x_0)x
+(\gamma+\alpha_1x_0^2+\beta_0\hspace{0.2mm}x_0^2+2a_1x_0^3+3b_0\hspace{0.2mm}x_0^3)y-2x_0(3\alpha_0+7a_0\hspace{0.2mm}x_0)x^2\\
&\hspace{1.32cm}+(c_0-3\alpha_1x_0-3\beta_0\hspace{0.2mm}x_0-5a_1x_0^2-8b_0\hspace{0.2mm}x_0^2)xy+(c_1-\alpha_2x_0-\beta_1x_0-a_2x_0^2-2b_1x_0^2)y^2
+(3\alpha_0+6a_0\hspace{0.2mm}x_0)x^3\\
&\hspace{1.32cm}+(2\alpha_1+3\beta_0+3a_1x_0+6b_0\hspace{0.2mm}x_0)x^2y+(\alpha_2+2\beta_1+a_2x_0+3b_1x_0)xy^2+(\beta_2+b_2x_0)y^3,\\
&S(y,z)=a_0+b_0y_0+a_3y_0^3+3a_4y_0^4+b_3y_0^4+3b_4y_0^5+(a_1+b_1y_0-3a_3y_0^2-8a_4y_0^3-3b_3y_0^3-8b_4y_0^4)y\\
&\hspace{1.24cm}+(\alpha_0+\beta_0y_0-\alpha_2y_0^2-2\alpha_3y_0^3-\beta_2y_0^3-2\beta_3y_0^4)z+(a_2+3a_3y_0+b_2y_0+6a_4y_0^2+3b_3y_0^2+6b_4y_0^3)y^2\\
&\hspace{1.24cm}+(\alpha_1+2\alpha_2y_0+\beta_1y_0+3\alpha_3y_0^2+2\beta_2y_0^2+3\beta_3y_0^3)yz,\\
&T(x,z)=-b_4z_0-z_0(a_4+b_3-c_1z_0^2-3\gamma\hspace{0.1mm}z_0^3)x+(b_4-\beta_3z_0)z-z_0(a_3+b_2+\beta_1z_0+2c_0z_0^2)x^2
+(\beta_2+3c_1z_0+6\gamma\hspace{0.1mm}z_0^2)xz^2\\
&\hspace{1.32cm}+(b_3-\alpha_3z_0-\beta_2z_0-3c_1z_0^2-8\gamma\hspace{0.1mm}z_0^3)xz+\beta_3z^2-z_0(a_2+\alpha_1z_0)x^3+(b_2-\alpha_2z_0+\beta_1z_0+3c_0z_0^2)x^2z,
\end{align*}
\end{small}
\hspace{-1.6mm}so that the equality  $\tau(\F,m_2)=3$ (resp. $\tau(\F,m_3)=3$, resp. $\tau(\F,m_4)=3$, resp. $\tau(\F,m_5)=3$, resp. $\tau(\F,m_6)=3$) implies that the polynomial $P$ (resp. $Q$, resp. $R$, resp. $S$, resp. $T$) is identically zero. From  $P=Q=0$ we obtain  $a_0=a_1=a_2=\alpha_0=\alpha_1=b_4=b_3=b_2=\beta_3=\beta_2=0$. Next, from the equalities $R=S=T=0$ we deduce that
\begin{align*}
&
c_0=2\gamma\hspace{0.1mm}y_0z_0(x_0y_0z_0+1),&&
c_1=-2\gamma\hspace{0.1mm}z_0,&&
\alpha_2=2\gamma\hspace{0.1mm}y_0z_0^2(x_0y_0z_0+2),&&
\alpha_3=-2\gamma\hspace{0.1mm}z_0^2,&&
\beta_0=2\gamma\hspace{0.1mm}x_0y_0^3z_0^3,\\
&
\beta_1=-2\gamma\hspace{0.1mm}y_0z_0^2(2x_0y_0z_0+1),&&
a_3=-2\gamma\hspace{0.1mm}y_0z_0^3,&&
a_4=\gamma\hspace{0.1mm}z_0^3,&&
b_0=-\gamma\hspace{0.1mm}y_0^3z_0^3,&&
b_1=2\gamma\hspace{0.1mm}y_0^2z_0^3,\\
&
(x_0y_0z_0)^{2}+x_0y_0z_0+1=0.
\end{align*}
Let us set $\rho=x_0y_0z_0$; then $\rho^2+\rho+1=0$ and $\omega$ is of type
\begin{align*}
&\omega=\gamma\Big(x\mathrm{d}y-y\mathrm{d}x\Big)\Big(1+2y_0z_0(\rho+1)x-2z_0y\Big)
+2\gamma\hspace{0.1mm}z_0^2y^2\Big(y_0(\rho+2)x-y\Big)\mathrm{d}x
+\gamma\hspace{0.1mm}z_0^3y^3\Big(y-2y_0\hspace{0.1mm}x\Big)\mathrm{d}x\\
&\hspace{0.68cm}+2\gamma\hspace{0.1mm}y_0z_0^2\hspace{0.1mm}x^2\Big(y_0\rho x-(2\rho+1)y\Big)\mathrm{d}y
+\gamma\hspace{0.1mm}y_0^2z_0^3x^3\Big(2y-y_0\hspace{0.1mm}x\Big)\mathrm{d}y.
\end{align*}
This $1$-form is linearly conjugated to
\[
\omegahesse=(2x^{3}-y^{3}-1)y\mathrm{d}x+(2y^{3}-x^{3}-1)x\mathrm{d}y\hspace{1mm}.
\]
Indeed, the fact that $\rho$ satisfies $\rho^2+\rho+1=0$ implies that
\[
\hspace{1.3cm}\omegahesse=\frac{\raisebox{-0.5mm}{$9$}y_0z_0^2}{\gamma(\rho-1)}\varphi^*\omega,
\quad \text{where}\hspace{1.5mm}
\varphi=\left(\frac{2\rho+1-(\rho+2)x-(\rho+2)y}{3y_0z_0},\frac{(\rho-1)x-(2\rho+1)y+\rho+2}{3z_0}\right).
\]
\end{proof}


\end{document}